\definecolor{darkred}{rgb}{0.6,0,0}
\definecolor{darkgreen}{rgb}{0,0.5,0}
\definecolor{darkmagenta}{rgb}{0.5,0,0.5}
\let\@ltx@cite=\@cite
\renewcommand\@cite[2]{\textup{\@ltx@cite{#1}{#2}}}
\DeclarePairedDelimiter\abs{\lvert}{\rvert}
\DeclarePairedDelimiter\norm{\lVert}{\rVert}
\DeclarePairedDelimiter\pars()
\DeclarePairedDelimiter\bracks\lbrack\rbrack
\newcommand\iverson[2][]{\bracks[#1]{\mkern1mu #2 \mkern1mu}}
\providecommand\given{}
\newcommand\SetSymbol[1][]{\nonscript\:#1\vert\allowbreak
   \nonscript\:\mathopen{}}
\DeclarePairedDelimiterX\Set[1]\{\}{%
   \renewcommand\given{\SetSymbol[\delimsize]}
   #1}
\newcommand\lAngle[1][]{\mathopen{\ooalign{$#1\langle$\hidewidth\cr$\mkern2mu#1\langle$}}}
\newcommand\rAngle[1][]{\mathclose{\ooalign{$#1\rangle$\hidewidth\cr$\mkern2mu#1\rangle$}}}
\newcommand\gauge[2][]{\lAngle[#1]#2\rAngle[#1]}
\newcommand\vVert[1][]{{\ooalign
    {$#1\vert$\hidewidth\cr$\mkern2.5mu#1\vert$\hidewidth\cr$\mkern5mu#1\vert$}}}
\newcommand\altnorm[2][]{\mathopen{\vVert[#1]}#2\mathclose{\vVert[#1]}}
\newcommand{\C}{\mathbb C}
\newcommand{\R}{\mathbb R}
\newcommand{\dott}{\, \cdot\,}
\DeclareMathOperator{\co}{co}
\DeclareMathOperator{\cco}{\overline{co}}
\newtheorem{theorem}{Theorem}[section]
\newtheorem*{theorem*}{Theorem}
\newtheorem{proposition}[theorem]{Proposition}
\newtheorem{lemma}[theorem]{Lemma}
\newtheorem*{lemma*}{Lemma}
\theoremstyle{definition}
\newtheorem{definition}[theorem]{Definition}
\newtheorem{definitions}[theorem]{Definitions}
\newtheorem*{remark*}{Remark}
\numberwithin{equation}{section}
\title{The Aubin--Lions--Dubinski\u{\i} theorems on compactness in Bochner spaces}
\author[Hanche-Olsen]{Harald Hanche-Olsen}
\address[Hanche-Olsen]{\newline
    Department of Mathematical Sciences,
   NTNU  Norwegian University of Science and Technology,
    NO--7491 Trondheim, Norway}
\email[]{\href{harald.hanche-olsen@ntnu.no}{harald.hanche-olsen@ntnu.no}} 
\urladdr{\href{https://folk.ntnu.no/hanche/}{https://folk.ntnu.no/hanche/}}
\author[Holden]{Helge Holden}
\address[Holden]{\newline
    Department of Mathematical Sciences,
   NTNU  Norwegian University of Science and Technology,
    NO--7491 Trondheim, Norway}
\email[]{\href{helge.holden@ntnu.no}{helge.holden@ntnu.no}} 
\urladdr{\href{https://www.ntnu.edu/employees/helge.holden}{https://www.ntnu.edu/employees/helge.holden}}
\date{\today} 
\subjclass[2020]{Primary: 46B50, 46E35; Secondary: 46N20, 46E30}
\keywords{Compactness in Bochner spaces. Aubin--Lions lemma, Dubinski\u{\i} compactness}
\thanks{Published in {\em Pure and Applied Functional Analysis}, Vol.~9, No 5, pp. 1133--1144 (2024). The research of HH was supported in part by the 
project \textit{IMod --- Partial differential equations, statistics and data:
An interdisciplinary approach to data-based modelling}, project number 65114, from the Research Council of Norway, and  the Swedish Research Council under grant no. 2021-06594 while HH was in residence at Institut Mittag-Leffler in Djursholm, Sweden during the fall semester of 2023.}
\dedicatory{Dedicated to Fritz Gesztesy with admiration on the occasion of his 70th birthday}
\begin{document}

\begin{abstract}
A fundamental issue in the theory of time-dependent differential equations
is to characterize precompact sets in Bochner spaces.

We here survey the theory, starting with the classical Aubin--Lions inequality
and its important extension by Dubinski\u{\i}.
In particular, we give a simple and self-contained proof
of the compactness result due to Chen, J\"ungel, and Liu.
\end{abstract}

\maketitle
\section{Introduction} \label{sec:intro}

A common question in the theory of time-dependent differential equations
regards sufficient conditions for precompactness
of sets in a Bochner space $L^p(0,T;B)$,
where $B$ is a  Banach space and $(0,T)$ is a time interval.
Recall that a subset $S$ of a metric space is called \emph{precompact}
if every sequence in $S$ has a convergent subsequence
(with a limit not necessarily in $S$).
Equivalently, the closure of $S$ is compact.

The classical Aubin theorem \cite{aubin-1963} from 1963 states that
if $f_n$ is bounded in $L^p(0,T;X)$ where $X\subset B$ is compactly embedded,
and the time derivative $\partial f_n/\partial t$ is bounded in  $L^p(0,T;Y)$,
where $B\subset Y$ is continuously embedded,
then the sequence $(f_n)$ is relatively compact in $L^p(0,T;B)$.
A general discussion of this topic can be found
in the classical and widely cited paper by Simon \cite{simon-1987}.
Later on, several generalizations and extensions have been proved.
Covering them all is beyond the scope of this paper.

A core part of Aubin's argument is based on
what is frequently called the Aubin--Lions lemma.
We prefer to call it the Aubin--Lions \emph{inequality}
to avoid confusion with the compactness theorem
for which it is an important ingredient.
Note that some authors (e.g., \cite{chen-jungel-liu-2014})
  refer to the resulting compactness theorem as the Aubin–Lions lemma.

\begin{lemma*}[Aubin–Lions inequality]
  Let $X\subseteq B\subseteq Y$ with a compact embedding of $X$ in $B$
    and continuous embedding of $B$ in $Y$.
    Then for all $\varepsilon>0$ there exists an $\eta$ such that for all $v\in X$
  \begin{equation*}
    \norm{v}_B\le \varepsilon\norm{v}_X+ \eta \norm{v}_Y.
  \end{equation*}
\end{lemma*}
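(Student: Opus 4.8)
The plan is to argue by contradiction, exploiting homogeneity together with the compactness of the embedding $X\hookrightarrow B$. Suppose the claimed inequality fails. Then there is some $\varepsilon_0>0$ for which no finite $\eta$ works; in particular, taking $\eta=n$ for each positive integer $n$, I obtain vectors $v_n\in X$ with
\[
  \norm{v_n}_B>\varepsilon_0\norm{v_n}_X+n\norm{v_n}_Y.
\]
Since all three quantities are positively homogeneous of degree one in $v_n$ and the right-hand side is nonnegative, the strict inequality forces $\norm{v_n}_B>0$, so I may rescale by $1/\norm{v_n}_B$ and assume $\norm{v_n}_B=1$ for every $n$.

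With this normalization the failing inequality reads $1>\varepsilon_0\norm{v_n}_X+n\norm{v_n}_Y$, which I would exploit in two ways. First, it gives $\norm{v_n}_X<1/\varepsilon_0$, so the sequence $(v_n)$ is bounded in $X$. Second, it gives $\norm{v_n}_Y<1/n$, so $v_n\to 0$ in $Y$.

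Now I invoke the compact embedding: since $(v_n)$ is bounded in $X$, some subsequence $(v_{n_k})$ converges in $B$ to a limit $v$. By continuity of the norm, $\norm{v}_B=\lim_k\norm{v_{n_k}}_B=1$, so $v\neq 0$. On the other hand, because the embedding $B\hookrightarrow Y$ is continuous, convergence in $B$ forces convergence in $Y$, so $v_{n_k}\to v$ in $Y$ as well; but we already know $v_{n_k}\to 0$ in $Y$, whence $v=0$ in $Y$. Since the embedding $B\hookrightarrow Y$ is injective, this forces $v=0$ in $B$, contradicting $\norm{v}_B=1$.

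The step I expect to carry the real weight is the application of compactness, which is what produces the $B$-convergent subsequence out of mere $X$-boundedness; everything else is bookkeeping with homogeneity and the two embeddings. The one point that deserves care is the final identification $v=0$: one must use that the embedding of $B$ into $Y$ is injective, so that vanishing in $Y$ genuinely forces vanishing in $B$ rather than merely in $Y$.
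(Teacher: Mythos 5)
Your argument is correct and complete: the negation of the statement, the homogeneity-based normalization $\norm{v_n}_B=1$, the extraction of a $B$-convergent subsequence from $X$-boundedness, and the identification of the limit via the continuous embedding into $Y$ are all sound; and the injectivity point you flag at the end is automatic here, since $B\subseteq Y$ is a genuine inclusion of subspaces, so vanishing in $Y$ is vanishing as a vector. Note, however, that the paper itself offers no proof of this lemma; it only refers the reader to Lions \cite{lions-1961}. The closest thing to a proof in the paper is its proof of Lemma~\ref{lemma:cone-ehrling} (the Chen--J\"ungel--Liu generalization), which contains the Aubin--Lions inequality as a special case: take the gauge on $X$ to be its norm (compactness of the embedding $X\subseteq B$ says exactly that the unit ball of $X$ is precompact in $B$, i.e., $X$ is a compactly gauged cone), observe that a continuous inclusion $B\subseteq Y$ trivially rules out a sequence converging to $0$ in $Y$ and to a non-zero vector in $B$, and set $v=0$ in the conclusion. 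The comparison is instructive: instead of arguing by contradiction with sequences, the paper normalizes by the gauge, forms the compact set $K$ of normalized differences, and uses that the $Y$-norm is bounded below on the compact set $\Set{w\in K \given \norm{w}\ge\eta}$, which avoids $0$. Your sequential argument is the classical route and is shorter in this special setting; the paper's argument pays for its extra generality (cones and gauges in place of a normed space $X$, and no embedding of $B$ into $Y$) with the topological reformulation of the non-degeneracy hypothesis.
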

The first proof can be found in Lions \cite[Prop.~4.1, p.~59]{lions-1961},
see also  \cite[Lemma 5.1, p.~58]{lions-1969}.

\begin{remark*}
  Some authors, see, e.g., \cite{MR0209639,barrett-suli-2012},
  refer to this as Ehrling's lemma.
  Also Nirenberg \cite{nirenberg} quotes Ehrling's result.
  In Ehrling \cite[eq.~(6)]{ehrling-1954}
  we find the following  concrete estimate, resembling the above inequality:
  \begin{equation}\label{eq:ehr1}
    \norm{f}_{L^2(\partial D)}^2\le A\big(h \norm{f}_{\dot H^1(D)}^2+h^{-1}\norm{f}_{L^2(D)}^2 \big)
  \end{equation}
  corresponding to $X=\dot H^1(D)$, $B=L^2(\partial D)$,  and $Y=L^2(D)$ with $h$ positive.
  Here, $\dot H$ denotes a homogeneous Sobolev space.
  However, these spaces do not have the above embedding properties.
  Using this on the derivative yields \cite[eq.~(7)]{ehrling-1954}
  \begin{equation*}
    \norm{f}_{\dot H^1(\partial D)}^2
    \le A\big(h \norm{f}_{\dot H^2(D)}^2+h^{-1}\norm{f}_{\dot H^1(D)}^2 \big).
  \end{equation*}
  Furthermore, he gets \cite[p.~272, line 7]{ehrling-1954}
  \begin{equation*}
    \norm{f}_{\dot H^1(D)}^2\le A\big(h \norm{f}_{\dot H^{2}(D)}^2 +h^{-1}\norm{f}_{L^2(D)}^2\big).
  \end{equation*}
  Estimate \eqref{eq:ehr1} appears as a ``Peter--Paul'' version
  of the standard Sobolev estimate for traces,
  namely $\norm{f}_{L^2(\partial D)}\le C\norm{f}_{H^1(D)}$.
  Since these estimates do not have the relevant embedding properties
  and are stated for very concrete spaces,
  the term “Ehrling's lemma” does not seem justified.
\end{remark*}

Shortly after the appearance of Aubin's paper, Dubinski\u{\i}
\cite[Lemma 1, p.~229 of \cite{MR0221883}]{dubinskii-1965}%
\footnote{\cite{MR0221883} contains the English translation of \cite{dubinskii-1965}},
see also \cite[Lemma 12.1, p.~141]{lions-1969},
generalized the Aubin--Lions inequality in a very useful direction.
See \cite{barrett-suli-2012} for a nice review
and correction of Dubinski\u{\i}'s argument.
His approach replaced the Banach space $X$ with a cone $M$,
being a subset of a vector space that is closed
with respect to multiplication with nonnegative scalars,
and the norm by a non-negative homogeneous scalar function
(a \emph{gauge} in our preferred terminology)
that only vanishes for a zero argument.
His aim was to show existence of unique weak solutions
of degenerate parabolic equations, an example being
\begin{equation*}
  u_t=\nabla\cdot\big(\abs{u}^\gamma \nabla u\big)+ h,
\end{equation*}
on a bounded domain in $\R^d$ with Dirichlet boundary conditions.
In order to accomplish this,
he proved the generalization of the Aubin--Lions inequality below.

We first define a gauge $[u]_M$ on a cone $M$
as a map $[\dott]_M\colon M\to [0,\infty)$
such that $[u]_M=0$ if and only if $u=0$.
In addition we require $[\lambda u]_M=\lambda [u]_M$ for all $\lambda\ge 0$.

\begin{lemma*}[Aubin--Lions–Dubinski\u{\i} inequality]
  Let $A_0, A_1$ be normed spaces, and let $M$ be a cone with gauge $[\dott]_M$.
  Assume that $M\subset A_0 \subset A_1$ with  $M\subset A_0$ compact
  and  $A_0 \subset A_1$ continuous embeddings, respectively.
  Then for any $\varepsilon>0$ there exists an $N(\varepsilon)$
  such that  for all $u,v\in M$ we have
  \begin{equation*}
    \norm{u-v}_{A_0}\le \varepsilon([u]_M+ [v]_M)+ N(\varepsilon) \norm{u-v}_{A_1}.
  \end{equation*}
\end{lemma*}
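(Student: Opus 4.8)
The plan is to argue by contradiction, in close analogy with the standard proof of the Aubin--Lions (Ehrling) inequality recalled above. Suppose the claimed estimate fails for some $\varepsilon_0>0$. Then for every $n\in\mathbb N$ we can choose $u_n,v_n\in M$ with
$\norm{u_n-v_n}_{A_0}>\varepsilon_0([u_n]_M+[v_n]_M)+n\norm{u_n-v_n}_{A_1}$.
If $[u_n]_M+[v_n]_M=0$ then $u_n=v_n=0$ and the inequality holds trivially, so we may assume this sum is strictly positive.

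The decisive step, and the place where the gauge forces a departure from the classical argument, is the normalization. Since the gauge and both norms are positively homogeneous of degree one and $M$ is a cone, replacing $(u_n,v_n)$ by $(\lambda_n u_n,\lambda_n v_n)$ with $\lambda_n=([u_n]_M+[v_n]_M)^{-1}$ multiplies both sides of the failing inequality by $\lambda_n$ and hence leaves it intact. After rescaling we may therefore assume $[u_n]_M+[v_n]_M=1$ for all $n$, which in particular gives $[u_n]_M\le 1$ and $[v_n]_M\le 1$. Thus both sequences lie in the gauge ball $\Set{w\in M\given [w]_M\le 1}$, which by the assumed compact embedding $M\subset A_0$ is relatively compact in $A_0$.

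From here the argument is routine. The relative compactness bounds $\norm{u_n}_{A_0}$ and $\norm{v_n}_{A_0}$, hence $\norm{u_n-v_n}_{A_0}\le C$ for some constant $C$; the failing inequality then forces $n\norm{u_n-v_n}_{A_1}<C$, so $\norm{u_n-v_n}_{A_1}\to 0$. Passing to a subsequence, $u_n\to u$ and $v_n\to v$ in $A_0$ (the limits $u,v$ need not lie in $M$, but only their membership in $A_0$ is used), whence $u_n-v_n\to u-v$ in $A_0$ and $\norm{u-v}_{A_0}=\lim\norm{u_n-v_n}_{A_0}\ge\varepsilon_0>0$. On the other hand, the continuous embedding $A_0\subset A_1$ gives $u_n-v_n\to u-v$ in $A_1$ as well, so $\norm{u-v}_{A_1}=\lim\norm{u_n-v_n}_{A_1}=0$. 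Since the embedding $A_0\subset A_1$ is injective, this yields $u=v$, contradicting $\norm{u-v}_{A_0}\ge\varepsilon_0$.

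The only genuine obstacle is the first, normalization, step: because a gauge need be neither subadditive nor symmetric, one cannot reduce to a single normalized element as in the Ehrling proof, nor can one control a quantity such as $[u_n-v_n]$. The symmetric normalization $[u_n]_M+[v_n]_M=1$ circumvents this by keeping $u_n$ and $v_n$ separately inside the gauge ball while treating their difference purely as a vector in the ambient space $A_0$; everything else is the usual extraction of a convergent subsequence.
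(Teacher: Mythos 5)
Your proof is correct: the degenerate case $[u_n]_M+[v_n]_M=0$ is disposed of properly (the gauge vanishes only at $0$), the symmetric normalization is legitimate by positive homogeneity and the cone property, and the limiting argument correctly combines precompactness of the gauge ball with injectivity of the inclusion $A_0\subset A_1$. It is, however, a genuinely different argument from the paper's. The paper never proves this lemma as stated; what it proves is the stronger Lemma \ref{lemma:cone-ehrling} of Chen, J\"ungel, and Liu \cite{chen-jungel-liu-2014}, in which the continuous embedding $A_0\subset A_1$ is replaced by the weaker hypothesis that no sequence in $B\cap Y$ converges to $0$ in $Y$ and to a nonzero vector in $B$; the present statement is the special case where that hypothesis holds automatically. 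Moreover, the paper's proof is direct rather than by contradiction: it forms the closure $K$ of the difference set $\Set{u-v \given u,v\in X,\ \gauge{u}+\gauge{v}\le 1}$, which is compact because the gauge ball is totally bounded and subtraction is uniformly continuous; the subset $L$ of $K$ on which the $B$-norm is at least $\eta$ is then compact and avoids $0$, so the $Y$-norm has a positive lower bound on $L\cap Y$, giving $\norm{w}\le\eta+c\altnorm{w}$ on $K$ for $c$ large; the same rescaling you use concludes. Both proofs turn on the identical key idea --- your ``decisive step'' of normalizing $\gauge{u}+\gauge{v}=1$ and treating $u-v$ as a mere ambient vector is exactly the paper's final substitution --- so the difference lies in the outer shell. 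The direct argument buys generality (it works verbatim under the sequence condition, with no embedding at all) and exhibits $N(\varepsilon)$ as a ratio of the norm bound on $K$ to the lower bound of the $Y$-norm on $L\cap Y$; your contradiction argument buys brevity, avoids completeness (the paper's proof uses completeness of $B$ to pass from total boundedness of $K$ to compactness, whereas you only use precompactness as defined, so your proof covers the merely-normed $A_0$ of the statement), and it too would generalize: the step where injectivity of the embedding turns $\norm{u-v}_{A_1}=0$ into $u=v$ is precisely where the Chen--J\"ungel--Liu sequence condition would be invoked instead.
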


\begin{remark*}
  Dubinski\u{\i} called $M$
  a seminormed non-negative cone
  with $[\dott]_M$ a seminorm,
  but we have used our preferred terminology instead.
  Precise definitions will be given later.
  He gives an example of a gauge   as
  \begin{equation*}
    [u]_M^{\alpha+\beta}= \int_D \abs{u}^\alpha \abs{\nabla u}^\beta dx+ \int_{\partial D}\abs{u}^{\alpha+\beta}ds,
  \end{equation*}
  for $\alpha>0$, $\beta\ge 1$ and $D$ a bounded domain in $\R^d$ with smooth boundary $\partial D$.
  The cone $M$ was the set of $u$ satisfying $[u]_M<\infty$.
  It is not closed under addition
  (equivalently for cones, not convex).
\end{remark*}

Dubinski\u{\i} showed the following compactness result,
shown here as corrected and generalized by Barrett and Süli:

\begin{theorem*}[{\cite[Thm.~1, p.\ 229]{dubinskii-1965},
    \cite[Thm.\ 2.1]{barrett-suli-2012}}]
  Let $A_0, A_1, M$ be as in the lemma,
  and $p$, $p_1 \in [0,\infty]$ with $(p,p_1) \ne (\infty,1)$.
  Further, let $\mathcal{Y}$ be the set of Bochner measurable functions $(0,T) \to M$
  having a weak derivative $u_t$ that is also Bochner measurable, such that
  \begin{equation*}
    [u]_{\mathcal{Y}}
    := \pars[\bigg]{\int_0^T [u]_M^p\,dt}^{1/p}
    + \pars[\bigg]{\int_0^T \norm{u_t}_{A_1}^{p_1}}^{1/p_1}
  \end{equation*}
  (with the obvious modifications if $p=\infty$ or $p_1=\infty$)
  is finite.
  Then $\mathcal{Y}\subset L^p((0,T); A_0)$ is a compact embedding.
\end{theorem*}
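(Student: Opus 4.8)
\emph{Proof strategy.} The plan is to take a sequence $(u_n)$ with $\sup_n[u_n]_{\mathcal Y}\le C<\infty$ and extract a subsequence converging in $L^p((0,T);A_0)$. Since $\{u\in M : [u]_M\le 1\}$ is relatively compact in $A_0$ it is bounded there, so by homogeneity of the gauge one has $\norm{u}_{A_0}\le C_0[u]_M$ for all $u\in M$; hence $\norm{u_n}_{L^p((0,T);A_0)}\le C_0C$ and the sequence is bounded in $L^p((0,T);A_0)$. I would then verify the two conditions of the Fréchet--Kolmogorov--Riesz compactness criterion for Bochner spaces (as in Simon \cite{simon-1987}): for $1\le p<\infty$ a bounded family is relatively compact in $L^p((0,T);A_0)$ if and only if the time--integrals $\int_{t_1}^{t_2}u_n\,dt$ lie in a relatively compact subset of $A_0$ for every $0<t_1<t_2<T$, and the translates satisfy $\sup_n\norm{\tau_h u_n-u_n}_{L^p((0,T-h);A_0)}\to0$ as $h\to0$.

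For the translation condition I would first work in the weaker space $A_1$, where the derivative bound lives. Writing $u_n(t+h)-u_n(t)=\int_t^{t+h}u_{n,t}(s)\,ds$ and setting $g_n(s)=\norm{u_{n,t}(s)}_{A_1}$, which is bounded in $L^{p_1}$, Hölder's inequality when $p_1>1$, and the elementary estimate $\int_0^{T-h}\pars[\big]{\int_t^{t+h}g_n}^p\,dt\le C^{p-1}h\norm{g_n}_{L^1}$ when $p_1=1$ and $p<\infty$, both give $\sup_n\norm{\tau_h u_n-u_n}_{L^p((0,T-h);A_1)}\to0$ as $h\to0$; the only case in which this breaks down is precisely the excluded pair $(p,p_1)=(\infty,1)$, where $\int_t^{t+h}g_n$ need not be small uniformly in $t$. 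I would then upgrade to an $A_0$ estimate by applying the Aubin--Lions--Dubinski\u{\i} inequality pointwise in $t$ to $u=u_n(t+h)$, $v=u_n(t)$ and taking $L^p$ norms: for every $\varepsilon>0$,
\[
  \norm{\tau_h u_n-u_n}_{L^p((0,T-h);A_0)}
  \le 2\varepsilon C + N(\varepsilon)\,\norm{\tau_h u_n-u_n}_{L^p((0,T-h);A_1)},
\]
the first term being controlled uniformly in $n$ and $h$ by the $L^p$ gauge bound. Letting $h\to0$ and then $\varepsilon\to0$ yields the required uniform translation smallness in $A_0$.

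The main obstacle is the integral condition, exactly because $M$ is merely a cone and need not be convex, so the averages $\int_{t_1}^{t_2}u_n\,dt$ leave $M$ and the compact embedding $M\subset A_0$ cannot be applied to them directly. I would resolve this by truncating the gauge at a level $R$ and splitting $u_n=u_n\mathbf 1_{\{[u_n]_M\le R\}}+u_n\mathbf 1_{\{[u_n]_M>R\}}$. For the second piece, Hölder's inequality together with Chebyshev's bound $\abs{\{[u_n]_M>R\}}\le(C/R)^p$ gives $\bigl\lVert\int_{t_1}^{t_2}u_n\mathbf 1_{\{[u_n]_M>R\}}\,dt\bigr\rVert_{A_0}\le C_0C^p R^{-(p-1)}\to0$ as $R\to\infty$, uniformly in $n$. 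For the first piece the integrand takes values in $R\,K$, where $K=\overline{\{u\in M : [u]_M\le 1\}}\cup\{0\}$ is compact in $A_0$; hence its Bochner integral lies in $(t_2-t_1)R\,\cco(K)$, and the key point is that the closed convex hull of a compact subset of a Banach space is again compact. Thus $\{\int_{t_1}^{t_2}u_n\,dt\}$ lies within distance $o_R(1)$ of the fixed compact set $(t_2-t_1)R\,\cco(K)$ uniformly in $n$, so it is totally bounded, hence relatively compact, in $A_0$. With both conditions verified, the criterion supplies a subsequence converging in $L^p((0,T);A_0)$.

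It remains to dispose of the endpoints. When $p=\infty$ the gauge bound holds pointwise, $[u_n(t)]_M\le C$ for a.e.\ $t$, so each slice $\{u_n(t)\}$ already lies in the compact set $C\,K$; the conclusion then follows from the Arzelà--Ascoli theorem together with the $A_0$ equicontinuity furnished by the translation estimate, with $p_1>1$ guaranteed by the exclusion. The case $p=1$ additionally requires that $\{[u_n]_M\}$ be equi-integrable in $L^1((0,T))$ in order to make the truncation estimate for the second piece uniform; this is the one place where an argument beyond the above is needed, and it is the point at which Dubinski\u{\i}'s original reasoning had to be repaired.
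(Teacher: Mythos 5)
Your architecture is sound, and it closely parallels the machinery this paper develops for its generalizations: Simon's criterion plays the role of Theorem~\ref{thm:kras}; your truncation at gauge level $R$ plus the fact that the closed convex hull of a compact set is compact is exactly the device in the proof of Theorem~\ref{thm:ccc}; and your pointwise application of the Aubin--Lions--Dubinski\u{\i} inequality to upgrade translation smallness from $A_1$ to $A_0$ is the proof of Theorem~\ref{thm:cjl+} (with Lemma~\ref{lemma:cone-ehrling} in the role of the inequality you quote). The step you supply on your own --- converting the $L^{p_1}$ bound on $u_t$ into uniform smallness of translates in $A_1$, including the elementary estimate for $p_1=1$, $p<\infty$, and the correct identification of $(p,p_1)=(\infty,1)$ as the one case where this fails --- is right, and it is precisely what is needed to feed a derivative hypothesis into any translation-based criterion. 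For $1<p\le\infty$ your outline is complete and correct.

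At $p=1$, however, there is a genuine gap which you acknowledge but do not close, and your proposed repair is the wrong one. You say the case ``requires that $\{[u_n]_M\}$ be equi-integrable in $L^1((0,T))$''; this is not a hypothesis of the theorem, and it does \emph{not} follow from the hypotheses. For instance, in $\ell^2$ let $M=\bigcup_k [0,\infty)e_k$ with gauge $\gauge{te_k}=tk$ (a compactly gauged cone); the functions $u_n$ equal to $(1/n)e_{n^2}$ on $(0,1/n)$ and $0$ elsewhere (suitably smoothed) satisfy every hypothesis with $p=1$ and any $p_1$, yet their gauges behave like $n\iverson{0<t<1/n}$ and are not equi-integrable --- while the sequence itself converges to $0$ in $L^1((0,T);\ell^2)$, so the theorem holds. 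What your truncation argument actually needs, and what \emph{does} follow from the hypotheses, is uniform $L^1$-integrability of the scalar family $\Set{\norm{u_n}_{A_0}}$: this family is bounded in $L^1(0,T)$ (since $\norm{\dott}_{A_0}\le C_0\gauge{\dott}$) and $L^1$-equicontinuous (reverse triangle inequality applied to the $A_0$-translation estimate you already derived via the Ehrling step, which is valid at $p=1$), hence relatively compact in $L^1(0,T)$ by the scalar Kolmogorov--Riesz theorem, hence uniformly integrable. This is exactly the paper's device (Lemmas~\ref{lemma:tbui}, \ref{lemma:ui}, \ref{lemma:uix} inside the proof of Theorem~\ref{thm:ccc}, with Proposition~\ref{prop:contint} handling the passage from $(0,T)$ to $\R$). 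With it, the tail satisfies $\norm[\big]{\int_{t_1}^{t_2}u_n\iverson{\gauge{u_n}>R}\,dt}_{A_0}\le\int_{E_{n,R}}\norm{u_n}_{A_0}\,dt$ with $\abs{E_{n,R}}\le C/R$ by Chebyshev, which is uniformly small as $R\to\infty$; substituting this for your H\"older step makes the truncation argument work for all $p\in[1,\infty)$ at once. (Your historical attribution of the needed repair to the $p=1$ case is also doubtful --- the Barrett--S\"uli correction concerns the non-convexity of $M$ and the exclusion of $(\infty,1)$ --- but that is a side issue.)
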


Dubinski\u{\i}'s inequality was further generalized in \cite{chen-jungel-liu-2014},
where the authors realized that one does not need the continuous embedding $A_0\subset A_1$.
More precisely they showed the following result.

\begin{lemma*}
  Let $B,Y$ be Banach spaces, and let $M$ be a cone in $B$ with gauge $[\dott]_M$.
  Assume that $M\subset B$ is compactly embedded.
  Assume that for all $w_n\in B\cap Y$
  such that $w_n\to w$ in $B$ and $w_n\to 0$ in $Y$  we have that $w=0$.
  Then we have that for any $\varepsilon>0$ there exists an $N(\varepsilon)$
  such that  for all $u,v\in M$ we have
  \begin{equation*}
    \norm{u-v}_{B}\le \varepsilon([u]_M+ [v]_M)+ N(\varepsilon) \norm{u-v}_{Y}.
  \end{equation*}
\end{lemma*}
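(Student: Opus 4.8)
The plan is to argue by contradiction, following the classical normalization-and-compactness scheme, but replacing the uniqueness of limits (which a continuous embedding would supply) by the separation hypothesis. Suppose the asserted inequality fails for some $\varepsilon_0>0$. Then there are sequences $(u_n),(v_n)\subset M$ and constants $N_n\to\infty$ with
\begin{equation*}
  \norm{u_n-v_n}_B > \varepsilon_0\pars{[u_n]_M+[v_n]_M} + N_n\norm{u_n-v_n}_Y.
\end{equation*}
Note $\norm{u_n-v_n}_B>0$, since otherwise the left-hand side vanishes and the inequality cannot be violated. As the gauge and both norms are positively homogeneous of degree one while $M$ is a cone, I would rescale $u_n,v_n$ by the positive factor $1/\norm{u_n-v_n}_B$, so that after relabelling $\norm{u_n-v_n}_B=1$ for every $n$.

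Dividing the violated estimate through by $\norm{u_n-v_n}_B=1$, I would then read off two consequences: the gauges stay bounded, $[u_n]_M+[v_n]_M<1/\varepsilon_0$, and the $Y$-norms vanish, $\norm{u_n-v_n}_Y<1/N_n\to 0$. Because $M\subset B$ is a compact embedding, the gauge-bounded set $\Set{u\in M \given [u]_M\le 1/\varepsilon_0}$ is precompact in $B$; passing to a subsequence gives $u_n\to u^\ast$ in $B$, and a further subsequence gives $v_n\to v^\ast$ in $B$, with all preceding bounds persisting. Setting $w_n:=u_n-v_n$, I would thus obtain $w_n\to u^\ast-v^\ast$ in $B$ and simultaneously $w_n\to 0$ in $Y$.

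The decisive step, which I expect to be the main obstacle, is to invoke the separation hypothesis. Each $w_n$ lies in $B\cap Y$, its $Y$-norm being finite as part of the violated estimate; combined with $w_n\to u^\ast-v^\ast$ in $B$ and $w_n\to 0$ in $Y$, the assumption forces $u^\ast-v^\ast=0$. But then $1=\norm{w_n}_B\to\norm{u^\ast-v^\ast}_B=0$, a contradiction, and the inequality follows. It is precisely at this identification of the two limits, and nowhere else, that the weaker compatibility condition must do the work previously carried by the continuous embedding $A_0\subset A_1$; the normalization and the extraction of convergent subsequences are otherwise routine.
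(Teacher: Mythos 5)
Your proof is correct, but it takes a genuinely different route from the one in the paper (which proves this statement as Lemma~\ref{lemma:cone-ehrling}). You run the classical Ehrling-type contradiction scheme: negate the inequality, normalize so that $\norm{u_n-v_n}_B=1$, use the compact embedding of the gauge-bounded set to extract $u_n\to u^\ast$ and $v_n\to v^\ast$ in $B$, and then apply the sequential separation hypothesis verbatim to $w_n=u_n-v_n$ to force $u^\ast=v^\ast$, contradicting $\norm{w_n}_B=1$. The paper instead gives a direct argument: it first reformulates the sequential hypothesis topologically (every non-zero point of $B$ has a neighborhood $U$ such that $\altnorm{\dott}$ is bounded below on $U\cap Y$, hence bounded below on $K\cap Y$ for any compact $K\subset B\setminus\{0\}$), then forms the compact set $K$ of differences $u-v$ with $\gauge{u}+\gauge{v}\le 1$, restricts to $L=\Set{w\in K \given \norm{w}\ge\eta}$, reads off the constant $c$ from the lower bound on $L\cap Y$, and finally rescales by the gauge sum rather than by the $B$-norm of the difference. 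Your version is shorter and uses the hypothesis exactly in the form it is stated, with the separation condition doing precisely the work that uniqueness of limits (from a continuous embedding $B\subseteq Y$) would otherwise do --- as you correctly identify; the paper's version avoids a top-level contradiction and makes transparent what the hypothesis really supplies, namely a uniform positive lower bound for $\altnorm{\dott}$ on compact subsets of $B\setminus\{0\}$ intersected with $Y$. One point you handle a bit informally, and which both proofs must address, is the case $u-v\notin Y$: your remark that a violating pair necessarily has finite $Y$-norm is the right reading (the inequality is vacuous when $\norm{u-v}_Y=+\infty$), matching the paper's convention of assigning the lower bound $+\infty$ when $U\cap Y=\emptyset$.
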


Under these conditions, they showed the following compactness result.

\begin{theorem*}
  Let $B, Y, M$ be as in the lemma, and assume that $M\cap Y\neq \emptyset$.
  Let $p\in [1,\infty]$.
  Assume the conditions of the lemma are satisfied.
  Let $U\subset L^p((0,T);M\cap Y)$ be bounded in $L^p((0,T);M)$.
  Furthermore, assume that translations are uniformly continuous, that is,
  $\norm{{\sigma_h u-u}}_{L^p((0,T-h);Y)}\to 0$ uniformly for $u \in U$ as $h\to 0$.

  Then $U$ is relatively compact in $L^p((0,T);B)$.
\end{theorem*}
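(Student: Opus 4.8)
The plan is to reduce the statement to the two hypotheses of a Simon-type compactness criterion in Bochner spaces \cite{simon-1987}: uniform equicontinuity of the time-translations $\sigma_h$ in $L^p((0,T);B)$, and relative compactness in $B$ of the set of time-averages $\Set{\frac1{t_2-t_1}\int_{t_1}^{t_2}u(t)\,dt \given u\in U}$ for all $0<t_1<t_2<T$. Once these are in hand, one may either invoke \cite{simon-1987} directly, or—more in the spirit of a self-contained proof—approximate each $u$ by its piecewise-constant time-average $P_n u$ on a uniform partition of $(0,T)$ into $n$ pieces: equicontinuity gives $\norm{u-P_nu}_{L^p((0,T);B)}\to 0$ uniformly in $u\in U$, while $P_nu$ is determined by finitely many averages taking values in a fixed compact subset of $B$, and total boundedness of $U$ follows. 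The entire role of the preceding lemma is to convert $Y$-control into $B$-control, whereas the compact embedding $M\subset B$ (through Mazur's theorem) supplies the compactness of the averages.

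First I would establish the $B$-equicontinuity. For a.e.\ $t$ both $u(t)$ and $\sigma_h u(t)=u(t+h)$ lie in $M$, so the inequality of the lemma applies pointwise; taking the $L^p$-norm in $t$ over $(0,T-h)$ and bounding $\norm{[\sigma_h u]_M}_{L^p(0,T-h)}$ and $\norm{[u]_M}_{L^p(0,T-h)}$ by the constant $C$ that bounds $U$ in $L^p((0,T);M)$, one obtains
\begin{equation*}
  \sup_{u\in U}\norm{\sigma_h u-u}_{L^p((0,T-h);B)}
  \le 2C\varepsilon + N(\varepsilon)\,\sup_{u\in U}\norm{\sigma_h u-u}_{L^p((0,T-h);Y)} .
\end{equation*}
Letting $h\to0$ and then $\varepsilon\to0$, the assumed uniform continuity of translations in $Y$ is promoted to uniform continuity in $B$. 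Boundedness of $U$ in $L^p((0,T);B)$ follows immediately from continuity of the embedding $M\subset B$, i.e.\ $\norm{v}_B\le C_0[v]_M$.

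The main obstacle is the compactness of the averages, precisely because $M$ is only a cone and need not be convex: the average $\frac1{t_2-t_1}\int_{t_1}^{t_2}u\,dt$ need not lie in $M$, so the compact embedding cannot be applied to it directly—this is exactly the point at which Dubinski\u\i's original argument was deficient and where Barrett--S\"uli \cite{barrett-suli-2012} and \cite{chen-jungel-liu-2014} had to be careful. My remedy would combine Mazur's theorem with a truncation. For $R>0$ the sublevel set $\Set{v\in M\given [v]_M\le R}$ is relatively compact in $B$ by the compact embedding, so its closed convex hull $K_R:=\cco\Set{v\in M\given [v]_M\le R}$ is \emph{compact} in $B$. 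Splitting $(t_1,t_2)$ into $E_R(u):=\Set{t\given [u(t)]_M>R}$ and its complement $S$, on $S$ the integrand lies in the sublevel set, so $\frac1{\abs{S}}\int_S u\,dt\in K_R$ and hence $\int_S u\,dt$ lies in the compact set $[0,t_2-t_1]\cdot K_R$; on $E_R(u)$, Chebyshev gives $\abs{E_R(u)}\le (C/R)^p$ and H\"older gives $\norm{\int_{E_R(u)}u\,dt}_B\le C_0\,C\,(C/R)^{p-1}\to0$ as $R\to\infty$, uniformly in $u\in U$. Thus the set of integrals lies in a compact set up to an arbitrarily small uniform $B$-error, hence is totally bounded, hence relatively compact in $B$.

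Assembling Steps 1--3 through the discretization above (or through \cite{simon-1987}) yields that $U$ is relatively compact in $L^p((0,T);B)$. The cases $p=\infty$ and $p=1$ need only minor adjustments: for $p=\infty$ one works with $C([0,T];B)$ and the evident sup-norm modifications, while for $p=1$ the truncation error must instead be controlled through equi-integrability of $\Set{[u(\cdot)]_M\given u\in U}$, the crude H\"older bound no longer decaying in $R$. I expect the decisive point to be Step 3—circumventing the non-convexity of the cone $M$ via Mazur's theorem—after which Steps 1, 2 and the final assembly are routine.
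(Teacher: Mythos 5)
Your core ideas coincide with the paper's: your Step 1 is precisely the proof of Theorem \ref{thm:cjl+} (promoting $Y$-equicontinuity to $B$-equicontinuity via Lemma \ref{lemma:cone-ehrling}), and your Step 3's truncation at a gauge level combined with Mazur's theorem is exactly how Theorem \ref{thm:ccc} establishes total boundedness in the mean. The genuine structural difference is that you stay on the interval $(0,T)$ and discretize by piecewise-constant averages, in the style of \cite{simon-1987}, whereas the paper extends functions by zero to $\R$, proves that this extension preserves equicontinuity (Proposition \ref{prop:contint} --- a non-trivial point requiring boundedness, which your route never has to confront, since you never translate across the endpoints), and then invokes the Kolmogorov--Riesz-type Theorem \ref{thm:kras}. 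Your route avoids the boundary-term analysis; the paper's buys statements on $\R^d$ and reusable machinery.

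There is, however, a genuine gap at $p=1$, which the theorem includes. Your bound on the truncation error, $\norm{\int_{E_R(u)}u\,dt}_B\le C_0C(C/R)^{p-1}$, indeed dies at $p=1$, and the remedy you propose --- equi-integrability of $\Set{[u(\dott)]_M \given u\in U}$ --- is not available: boundedness in $L^1((0,T);M)$ does not imply equi-integrability of the gauges (consider an approximate identity), and no hypothesis of the theorem supplies it. What does work, and is what the paper does in Lemma \ref{lemma:ui}, is to obtain uniform $L^p$-integrability of the scalar family $\Set{\norm{u} \given u\in U}$ of $B$-norms rather than of the gauges: since $\abs[\big]{\norm{u(t+h)}-\norm{u(t)}}\le\norm{u(t+h)-u(t)}$, your Step 1 already makes this scalar family $L^p$-equicontinuous and bounded, so the scalar Kolmogorov--Riesz theorem (or its interval version) makes it totally bounded in $L^p(0,T)$, hence uniformly $L^p$-integrable; combined with $\abs{E_R(u)}\le (C/R)^p\to 0$ uniformly in $u$, this kills the truncation error for all $1\le p<\infty$, including $p=1$. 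Note also that $p=\infty$ is not a matter of ``minor adjustments'': the paper restricts itself to $p<\infty$, and for $p=\infty$ the natural conclusion is compactness in $C([0,T];B)$ via an Arzel\`a--Ascoli argument, which is genuinely different machinery.
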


Our interest in these questions arose from the obvious analogy
with precompact subsets of Lebesgue spaces.
The classical theorem of Kolmogorov--Riesz and its improvement by Sudakov
give a complete  characterization of precompact subsets of Lebesgue spaces,
see \cite{h-o-h-2010, h-o-h-2010a, h-o-h-m-2019}.
In the present paper, we draw on the analogy between these questions.
In particular, we shall prove versions of the results referenced above
for Bochner spaces based on $\R^n$ rather than an interval $(0,T)$.
We strive for simplicity of proof and digestible and self-contained exposition.
We also suggest and use terminology somewhat different
from what is seen in the literature so far.

\section{Compactness for Bochner spaces based on Euclidean spaces} \label{sec:comp}

We start with a very brief overview
of the basic theory of Bochner spaces.
Consider a $\sigma$-finite measure space $(\Omega,\mu)$,
and a Banach space $B$.
A function $f \colon \Omega \to B$ is called \emph{Bochner measurable}
if it is the $\mu$-almost everywhere limit
of a sequence of simple functions,
a simple function being one that takes only a finite number of values,
each on a measurable subset of $\Omega$ with finite measure.
The Pettis measurability theorem \cite[Thm.~1.1]{pettis-1938}
states that $f$ is Bochner measurable if, and only if,
it is weakly measurable and separably-valued.
Here, $f$ is called \emph{weakly measurable}
if the composition of $f$
with any bounded linear functional is measurable,
and \emph{separably-valued} if there is a separable subspace $B' \subseteq B$
so that $f(x) \in B'$ for $\mu$-almost every $x \in \Omega$.
Pettis considered only the case where $\mu(\Omega)<\infty$,
but the $\sigma$-finite case follows easily.

The \emph{Bochner space} $L^p(\Omega,\mu;B)$ (with $1 \le p < \infty$) consists of
all Bochner measurable functions $f$
satisfying $\int_{\Omega} \norm{f}^p \,d\mu < \infty$.
With the norm $\norm{f}_p = \pars[\big]{\int_{\Omega} \norm{f}^p \,d\mu}^{1/p}$, the space
$L^p(\Omega,\mu;B)$ becomes a Banach space.

A note on notation: Whenever $f \colon \Omega \to B$,
we let $\norm{f}$ denote the \emph{function} $t \mapsto \norm{f(t)}$.
Think of it as “pointwise norm”.
Accordingly, we never omit the subscript $p$
on the $L^p$-norm $\norm{f}_p$.

It turns out that the simple functions are dense in $L^1(\Omega,\mu;B)$.
This fact allows the definition of the \emph{Bochner integral}
$\int_{\Omega} f \,d\mu$ for all $f \in L^1(\Omega,\mu;B)$ by continuity,
starting from the obvious definition of the integral for simple $f$.
Elementary properties like the \emph{integral triangle inequality}
$\norm{\int_{\Omega} f \,d\mu} \le \int_{\Omega} \norm{f} \,d\mu$
are immediate consequences.

The totally bounded subsets of $L^p(\Omega,\mu;B)$
were characterized by Diaz and Mayoral in \cite{diaz-mayoral-1999};
see \cite{van-neerven-2014} for an elementary proof.
This characterization is not well suited for applications in PDE theory, however.

From now on we concentrate our attention
on the Bochner spaces $L^p(\R^d;B)$
(the use of Lebesgue measure is hidden in the notation).
We write $\abs{Q}$ for the Lebesgue measure of a set $Q \subseteq \R^d$.

Our concern is with precompactness,
but let us turn our attention to a closely related concept.
Recall that a subset $S$ of a metric space $X$ is called \emph{totally bounded}
if for every $\varepsilon>0$, $S$ can be covered by a finite number of sets,
each of diameter less than $\varepsilon$ (an $\varepsilon$-cover).
Equivalently, for each $\varepsilon>0$, there is a finite subset of $S$ (an $\varepsilon$-net)
so that every member of $S$ is closer than $\varepsilon$ to some member of the subset.
It is well known that a metric space is compact
if and only if it is complete and totally bounded.
It follows that a subset of a complete metric space
is precompact if and only if it is totally bounded.
In applications, one wants convergent subsequences,
i.e., one wants precompactness.
However, from now on we shall concentrate on total boundedness instead,
simply because that is what emerges from the proofs.

In the following definitions,
$\mathcal{F}$ is a subset of $L^p(\R^d;B)$, where $1 \le p < \infty$.

\begin{definition}[\cite{krukowski-2023}] \label{def:Lp-ev}
  $\mathcal{F}$ is called
  \emph{$L^p$-equivanishing}
  if $\int_{\R^d} \iverson{\abs{x}>r}\,\norm{f(x)}^p\,dx \to 0$
  when $r \to \infty$, uniformly for $f \in \mathcal{F}$.
  Here and later we employ the “Iverson bracket” \cite{knuth-1992}:
  When $S$ is a statement, $\iverson{S}=1$ if $S$ is true,
  and $\iverson{S}=0$ if $S$ is false.
\end{definition}

\begin{definition}[\cite{krukowski-2023}] \label{def:Lp-ec}
  $\mathcal{F}$ is called
  \emph{$L^p$-equicontinuous}
  if $\norm{\sigma_{h}f-f}_p \to 0$
  when $\abs{h} \to 0$, uniformly for $f \in \mathcal{F}$.
  Here $\sigma_hf(x)=f(x+h)$ ($x$, $h \in \R^d$).
\end{definition}

It is easily seen that a totally bounded set
is $L^p$-equivanishing and $L^p$-equicontinuous,
since it is true of any singleton set
and hence of any finite set.

\begin{definition} \label{def:tbm}
  $\mathcal{F}$ is called
  \emph{totally bounded in the mean}
  if for each bounded measurable set $E \subset \R^d$,
  $\Set{\int_E f(x)\,dx \given f \in \mathcal{F}}$ is totally bounded in $B$.
\end{definition}

A totally bounded set is totally bounded in the mean
because the map $f \mapsto \int_E f(x)\,dx$ is uniformly continuous.

The following theorem is due (at least) to Aubin and Simon
in the case of functions supported on a bounded interval.
(In that case, $L^p$-equivanishing is of course irrelevant.)
It may also be considered a variant of the Kolmogorov–Riesz theorem
– and indeed, our proof is a straightforward adaptation
of the proof of the Kolmogorov–Riesz theorem
given in \cite{h-o-h-2010}.
All the compactness results below
will be proved by reducing them to this theorem.

\begin{theorem} \label{thm:kras} 
  Let $B$ be a Banach space,
  and $\mathcal{F} \subset L^p(\R^d;B)$ a subset with $1 \le p < \infty$.
  Then $\mathcal{F}$ is totally bounded if and only if
  it is $L^p$-equivanishing, $L^p$-equicontinuous,
  and totally bounded in the mean.
\end{theorem}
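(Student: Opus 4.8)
The necessity of the three conditions was already observed in the discussion preceding the theorem, so the substance lies in the sufficiency direction. The plan is to follow the Kolmogorov--Riesz proof of \cite{h-o-h-2010}: approximate every $f\in\mathcal F$, uniformly and within a prescribed error, by a $B$-valued step function built from local averages of $f$, and then show that the resulting family of step functions is itself totally bounded. The glue is the elementary remark that a set is totally bounded as soon as, for each $\varepsilon>0$, it lies within $\varepsilon$ of some totally bounded set: a finite $\varepsilon$-net of the approximating set yields a finite $2\varepsilon$-net of $\mathcal F$.

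To set up the averaging operator, fix $R>0$ and $r>0$, tile the cube $[-R,R]^d$ by finitely many congruent subcubes $Q_1,\dots,Q_N$ of side $r$, and define
\[
  Pf=\sum_{i=1}^N \iverson{x\in Q_i}\,\frac{1}{\abs{Q_i}}\int_{Q_i} f(y)\,dy,
\]
so that $Pf$ is a step function vanishing outside $[-R,R]^d$. I would estimate $\norm{Pf-f}_p$ in two pieces. Since any point outside $[-R,R]^d$ has $\abs{x}>R$, the tail satisfies $\int_{\R^d\setminus[-R,R]^d}\norm{f}^p\le\int_{\abs{x}>R}\norm{f}^p$, which is small uniformly in $f$ by $L^p$-equivanishing. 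Inside $[-R,R]^d$, on each $Q_i$ the integral triangle inequality combined with Jensen's inequality for the convex function $t\mapsto t^p$ gives
\[
  \norm{Pf(x)-f(x)}^p\le \frac{1}{\abs{Q_i}}\int_{Q_i}\norm{f(y)-f(x)}^p\,dy,
\]
and a Fubini rearrangement (writing $y=x+h$ with $\abs{h}\le r\sqrt{d}$) bounds the total averaging error by a dimensional constant times $\sup_{\abs{h}\le r\sqrt{d}}\norm{\sigma_h f-f}_p^p$, which tends to $0$ uniformly in $f\in\mathcal F$ by $L^p$-equicontinuity. Choosing first $R$ large, then $r$ small, forces $\norm{Pf-f}_p<\varepsilon$ simultaneously for all $f\in\mathcal F$.

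It remains to see that $\Set{Pf\given f\in\mathcal F}$ is totally bounded. Each such function is determined by the finite tuple of its cube averages $\pars[\big]{\abs{Q_i}^{-1}\int_{Q_i}f}_{i=1}^N\in B^N$, and the $L^p$-norm of the step function equals, up to the factor $r^{d/p}$, the $\ell^p$-norm of this tuple; hence $f\mapsto Pf$ factors through a bounded linear map $B^N\to L^p(\R^d;B)$, which carries totally bounded sets to totally bounded sets. By hypothesis $\mathcal F$ is totally bounded in the mean, so each coordinate set $\Set{\int_{Q_i}f\given f\in\mathcal F}$ is totally bounded in $B$; a finite product of totally bounded sets is totally bounded, so the tuples form a totally bounded subset of $B^N$, and therefore $\Set{Pf\given f\in\mathcal F}$ is totally bounded. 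Together with the previous paragraph this exhibits $\mathcal F$ as $\varepsilon$-close to a totally bounded set for every $\varepsilon>0$, hence totally bounded.

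I expect this last paragraph to be the one genuinely Bochner-specific step, and indeed the point of the adaptation. In the scalar Kolmogorov--Riesz theorem the averages automatically lie in the finite-dimensional space $\R^N$, where bounded sets are totally bounded for free; over a general Banach space $B$ this fails, and it is precisely the hypothesis of total boundedness in the mean that restores it. The remaining ingredient, the Jensen/Fubini bound on the averaging error, is the same routine computation as in the scalar case once the integral triangle inequality for the Bochner integral is invoked.
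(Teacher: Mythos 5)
Your proof is correct and follows essentially the same route as the paper's: the same tiling of the large ball by small cubes, the same integral-triangle-inequality/Jensen/Fubini estimate on the cube-averaging error, equivanishing for the tail, and total boundedness in the mean to control the averages. The only difference is the final glue --- you realize the averages as a step function back in $L^p(\R^d;B)$ and invoke the ``within $\varepsilon$ of a totally bounded set'' principle, whereas the paper keeps the tuple of averages in $B^N$ and applies Lemma~\ref{lemma:tb}; these are interchangeable packagings of the same argument.
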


We shall prove the theorem using the following lemma.
We omit the trivial proof.
The reader can probably construct one
in less time than it would take to look it up in \cite{h-o-h-2010}.

\begin{lemma}[{\cite[Lemma 1]{h-o-h-2010}}] \label{lemma:tb}
  Let $X$ be a metric space.
  Assume that, for every $\varepsilon>0$, there exists some
  $\delta>0$, a metric space $W$\!, and a mapping
  $\Phi\colon X\to W$ so that $\Phi[X]$ is totally bounded, and
  whenever $x,y\in X$ are such that
  $d\big(\Phi(x),\Phi(y)\big)<\delta$, then $d(x,y)<\varepsilon$.
  Then $X$ is totally bounded.
\end{lemma}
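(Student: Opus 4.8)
The plan is to verify total boundedness of $X$ directly from the definition in terms of covers by sets of small diameter, transferring a finite cover of the totally bounded image $\Phi[X]$ back to $X$ through the pullback by $\Phi$. The only subtlety to watch is the gap between the strict inequality $d(x,y)<\varepsilon$ supplied by the hypothesis and the strict diameter bound required in the definition; I will absorb this by invoking the hypothesis at $\varepsilon/2$ rather than at $\varepsilon$.

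So, fix $\varepsilon>0$. First I would apply the hypothesis with $\varepsilon/2$ in place of $\varepsilon$, obtaining a $\delta>0$, a metric space $W$, and a map $\Phi\colon X\to W$ such that $\Phi[X]$ is totally bounded and such that $d(\Phi(x),\Phi(y))<\delta$ forces $d(x,y)<\varepsilon/2$. Since $\Phi[X]$ is totally bounded, I would cover it by finitely many sets $U_1,\dots,U_n\subseteq W$, each of diameter less than $\delta$, so that $\Phi[X]\subseteq\bigcup_{i=1}^n U_i$.

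Next I would pull these sets back, setting $V_i=\Phi^{-1}(U_i)$. Because every $x\in X$ satisfies $\Phi(x)\in\Phi[X]\subseteq\bigcup_i U_i$, the sets $V_1,\dots,V_n$ cover $X$. For any two points $x,y\in V_i$ one has $\Phi(x),\Phi(y)\in U_i$, whence $d(\Phi(x),\Phi(y))\le\operatorname{diam} U_i<\delta$; the hypothesis then yields $d(x,y)<\varepsilon/2$. Thus each $V_i$ has diameter at most $\varepsilon/2<\varepsilon$, and $X$ is covered by finitely many sets of diameter less than $\varepsilon$. As $\varepsilon>0$ was arbitrary, $X$ is totally bounded.

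The argument is essentially bookkeeping, so I do not expect any genuine obstacle; the only two points requiring a moment's care are the inequality slack noted above and the observation that the covering property passes to the pullbacks precisely because the $U_i$ need only cover the image $\Phi[X]$, not all of $W$.
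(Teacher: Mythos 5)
Your proof is correct and is exactly the intended argument: the paper itself omits the proof as trivial, deferring to \cite{h-o-h-2010}, where Lemma~1 is proved by precisely this pullback of a finite cover of $\Phi[X]$ by sets of diameter less than $\delta$. Your extra step of invoking the hypothesis at $\varepsilon/2$ to turn the pointwise bound $d(x,y)<\varepsilon/2$ into a strict diameter bound is a fine (if not strictly necessary) piece of bookkeeping, since total boundedness is equivalently characterized by finite covers of diameter at most $\varepsilon$ for every $\varepsilon>0$.
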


\begin{proof}[Proof of Theorem \ref{thm:kras}]
  The necessity of the three conditions
  was dealt with above.

  Now assume that the three conditions are satisfied.
  Let $\varepsilon>0$, and pick $R$ and $\rho>0$ so that
  $\int_{\R^d}\iverson{\abs{x}>R} \, \norm{f(x)}^p\,dx < \varepsilon^p$
  ($L^p$-equivanishing)
  and
  $\int_{\R^d}\norm{f({x+h})-f(x)}^p\,dx < \varepsilon^p$ whenever $\abs{h} < \rho$
  ($L^p$-equicontinuity).

  Let $Q \subset \R^d$ be a closed cube with diameter less than $\rho$,
  centered at the origin.
  Let $Q_i$, $i=1$, \ldots, $N$, be non-overlapping
  (in the sense of pairwise disjoint interiors)
  translates of $Q$
  whose union contains
  $\Set{x \in \R^d \given \abs{x}<R}$.
  Define the map
  \begin{equation*}
    P \colon L^p(\R^d; B) \to B^N,\quad
    (Pf)_i = f_i := \abs{Q}^{-1} \int_{Q_i} f \, dx.
  \end{equation*}
  Note that $\abs{Q_i}=\abs{Q}$, so the integral is an average over $Q_i$.
  By total boundedness in the mean, $\Set{Pf \given f \in \mathcal{F}}$ is totally bounded.
  Note that
  \begin{align*}
    \norm{f(x)-f_i}^p
    &= \norm[\Big]{\abs{Q}^{-1}\int_{Q_i} \pars[\big]{f(x)-f(y)}\,dy}^p \\
    &\le \pars[\Big]{\abs{Q}^{-1}\int_{Q_i}\norm{f(x)-f(y)}\,dy}^p
    \le \abs{Q}^{-1} \int_{Q_i}\norm{f(x)-f(y)}^p\,dy
  \end{align*}
  using first the integral triangle inequality and then Jensen's inequality, and hence
  \begin{align*}
    \sum_{i=1}^{N} \int_{Q_i} \norm{f(x)-f_i}^p\,dx
    &\le \abs{Q}^{-1} \sum_{i=1}^{N} \int_{Q_i} \int_{Q_i} \norm{f(x)-f(y)}^p\,dy\,dx \\
    &\le \abs{Q}^{-1} \sum_{i=1}^{N} \int_{Q_i} \int_{2Q} \norm{f(x)-f(x+h)}^p \,dh \,dx \\
    &= \abs{Q}^{-1} \int_{2Q} \sum_{i=1}^{N} \int_{Q_i} \norm{f(x)-f(x+h)}^p \,dx \,dh \\
    &\le \abs{Q}^{-1} \int_{2Q} \int_{\R^d} \norm{f(x)-f(x+h)}^p \,dx \,dh < 2^d \varepsilon^p,
  \end{align*}
  at the end using the $L^p$-equicontinuity inequality
  and the fact that $\abs{h} < \rho$ when $h \in 2Q$.

  Now consider two functions $f$, $g \in \mathcal{F}$.
  Using the $L^p$-equivanishing inequality, the triangle inequality in $B$,
  Minkowski's inequality, and the above estimate, we find
  \begin{align*}
    \norm{f-g}_p
    &< 2\varepsilon + \pars[\bigg]{\sum_{i=1}^{N} \int_{Q_i} \norm{f(x)-g(x)}^p\,dx }^{1/p} \\
    &\le 2\varepsilon + \pars[\bigg]{\sum_{i=1}^{N} \int_{Q_i} \pars[\big]{
      \norm{f(x)-f_i}+\norm{f_i-g_i}+\norm{f_i(x)-g(x)}}^p\,dx }^{1/p} \\
    &\le 2\varepsilon
      + \pars[\bigg]{\sum_{i=1}^{N} \int_{Q_i} \norm{f(x)-f_i}^p\,dx }^{1/p}
      + \pars[\bigg]{\sum_{i=1}^{N} \int_{Q_i} \norm{f_i-g_i}^p\,dx }^{1/p} \\
    &\phantom{{}\le 2\varepsilon} + \pars[\bigg]{\sum_{i=1}^{N} \int_{Q_i} \norm{g_i-g(x)}^p\,dx }^{1/p} \\
    &< (2+2^{1+d/p})\varepsilon + \pars[\bigg]{\abs{Q} \sum_{i=1}^{N} \norm{f_i-g_i}^p }^{1/p} .
  \end{align*}
  Thus, if $\pars[\big]{\sum_{i=1}^{N} \norm{f_i-g_i}^p }^{1/p} < \abs{Q}^{-1/p} \varepsilon$
  then $\norm{f-g}_p<(3+2^{1+d/p})\varepsilon$.
  By Lemma \ref{lemma:tb} and the total boundedness of $\Set{Pf \given f \in \mathcal{F}}$,
  $\mathcal{F}$ is totally bounded.
\end{proof}

Directly proving that a family of functions
is totally bounded in the mean can be difficult.
However, a common scenario considers $L^p$-functions
with values in a compactly embedded subspace.
As mentioned in the introduction,
this has since been generalized,
replacing the subspace by a cone.
Once the usefulness of this generalization is realized,
the proof turns out to require little extra work
compared to the original setting.

We start with some definitions.
Note that we deviate from the terminology commonly seen in the literature
(indicated in parentheses below),
which we find cumbersome and somewhat confusing.

\begin{definitions}
  A \emph{cone} (nonnegative cone) is a nonempty subset $X$ of some vector space
  so that $ax \in X$ whenever $a \in [0,\infty)$ and $x \in X$.

  A \emph{gauge} (seminorm) on a cone $X$
  is a map $\gauge{\dott}\colon X \to [0,\infty)$
  so that $\gauge{ax}=a\gauge{x}$ whenever $x \in X$ and $a \ge 0$,
  and $\gauge{x}>0$ if $x \ne 0$.
  A cone with a gauge is called a \emph{gauged cone}
  (seminormed non-negative cone).

  A gauged cone $X$ in a Banach space is called
  a \emph{compactly gauged cone}
  (compactly embedded seminormed non-negative cone (!))
  if $X_r := \Set{x \in X \given \gauge{x} \le r}$ is precompact for $r=1$
  – and hence for all $r>0$, since $X_r=rX_1$.
\end{definitions}

Now assume that $X$ is a compactly gauged cone in a Banach space $B$.
Then there is a constant $C$ such that $\norm{x} \le C\gauge{x}$ for any $x \in X$.
We may assume without loss of generality that $C=1$, so $\norm{\dott} \le \gauge{\dott}$.
Define $L^p(\R^d;X)$ to be the set of all functions
$f \colon \R^d \to X$
such that $f$ is Bochner measurable as a function into $B$,
and $\gauge{f}$ is measurable as well,
with $\int_{\R^d} \gauge{f}^p \,dx < \infty$.
Write $\gauge{f}_p=\pars[\big]{\int_{\R^d}\gauge{f}^p\,dx}^{1/p}$.

The measurability of $\gauge{f}$
does not follow automatically from Bochner measurability,
as is shown by the following simple example:
Let $B = \C$ as a real vector space,
let $X$ be the upper half plane,
let $\psi \colon (0,\pi) \to (0,1)$ be a non-measurable function,
put $\gauge{te^{i\theta}}=t\psi(\theta)$ for $t \ge 0$ and $\theta \in (0,\pi)$,
and define $f(\theta)=e^{i\theta}$ for $\theta \in (0,\pi)$.

We can now state the following result.

\begin{theorem} \label{thm:ccc}
  Let $X$ be a compactly gauged cone in a Banach space $B$,
  and let $\mathcal{F} \subset L^p(\R^d;X)$ be bounded
  \textup(i.e., there is a uniform bound on $\gauge{f}_p$ for all $f \in \mathcal{F}$\textup).
  Assume further that $\mathcal{F}$ is $L^p$-equivanishing
  and $L^p$-equicontinuous in $L^p(\R^d;B)$.
  Then $\mathcal{F}$ is totally bounded in $L^p(\R^d;B)$.
\end{theorem}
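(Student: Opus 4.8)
The plan is to reduce the statement to Theorem~\ref{thm:kras}. By hypothesis $\mathcal{F}$ is already $L^p$-equivanishing and $L^p$-equicontinuous, so the sole remaining condition to verify is that $\mathcal{F}$ is totally bounded in the mean in the sense of Definition~\ref{def:tbm}; once this is established, Theorem~\ref{thm:kras} yields total boundedness of $\mathcal{F}$ in $L^p(\R^d;B)$ at once. Thus the whole task is: given a bounded measurable $E \subset \R^d$, show that $\Set{\int_E f\,dx \given f \in \mathcal{F}}$ is totally bounded in $B$.

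To produce a single compact set containing all these means, I would lean on the compactness encoded in the gauged cone. By Mazur's theorem, the closed convex hull $\cco(X_1)$ of the precompact set $X_1 = \Set{x \in X \given \gauge{x} \le 1}$ is compact in $B$. Writing $M$ for the uniform bound on $\gauge{f}_p$ and $p'$ for the conjugate exponent, Hölder's inequality gives $a := \int_E \gauge{f}\,dx \le \abs{E}^{1/p'} M =: K$ for every $f \in \mathcal{F}$. The heart of the argument is then to realize each mean as a scaled barycenter: for $f$ with $a>0$, set $g := f/\gauge{f}$ on the set where $f \ne 0$, so that $\gauge{g}=1$ and hence $g$ is valued in $X_1$, while $d\nu := a^{-1}\gauge{f}\,dx$ is a probability measure on $E$. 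Since $\norm{g} \le \gauge{g} = 1$ the function $g$ is $\nu$-integrable, and $a^{-1}\int_E f\,dx = \int_E g\,d\nu$ is the barycenter of an $X_1$-valued function, which lies in $\cco(X_1)$. As $\cco(X_1)$ is convex and contains $0$, scaling back by $a \in [0,K]$ keeps the mean inside $K\,\cco(X_1)$ — the degenerate case $a=0$, where $f=0$ a.e.\ on $E$ and $\int_E f\,dx = 0$, being trivial. Hence $\Set{\int_E f\,dx \given f \in \mathcal{F}} \subseteq K\,\cco(X_1)$, a compact and therefore totally bounded subset of $B$, which is exactly what Definition~\ref{def:tbm} requires.

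The step I expect to demand the most care is the measure-theoretic bookkeeping just sketched, rather than anything conceptually deep. Two points need justification. First, that $g$ is Bochner measurable: this uses the measurability of $\gauge{f}$ (an assumption built into $L^p(\R^d;X)$, and not automatic, as the example preceding the theorem shows) together with the fact that $\gauge{x}=0$ precisely when $x=0$, so that $g$ is a genuine $X_1$-valued Bochner measurable function off the null set $\Set{f=0}$. Second, that the barycenter of an $X_1$-valued function lies in $\cco(X_1)$: this is the standard Hahn–Banach separation argument, since were $\int_E g\,d\nu$ outside the compact convex set $\cco(X_1)$, a separating functional $\phi$ would force $\sup_{\cco(X_1)}\phi < \phi\pars[\big]{\int_E g\,d\nu} = \int_E \phi(g)\,d\nu \le \sup_{X_1}\phi \le \sup_{\cco(X_1)}\phi$, which is absurd. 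With these two points in hand the reduction is complete.
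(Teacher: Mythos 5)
Your proof is correct, and it takes a genuinely different route from the paper's. Both arguments reduce the theorem to Theorem \ref{thm:kras}, and both rest on the compactness (Mazur) of the closed convex hull of the precompact sets $X_r$, together with the fact that the barycenter of an $X_r$-valued function against a probability measure lies in that hull --- a fact the paper uses tacitly and you actually prove via Hahn--Banach separation. The difference is in how the means are captured. The paper never places the means $\int_E f\,dx$ inside a single compact set: it truncates, setting $\tilde f = \iverson{\gauge{f}\le r}\,f$, shows via Chebyshev's inequality and uniform $L^p$-integrability that $\norm{f-\tilde f}_p<\varepsilon$ once $r$ is large, and concludes that each mean lies within a constant times $\varepsilon$ of the compact set $\abs{E}\cco X_r$; since $\varepsilon>0$ is arbitrary (with $r$ depending on $\varepsilon$), total boundedness in the mean follows. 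The uniform integrability step costs the paper three lemmas (Lemmas \ref{lemma:tbui}, \ref{lemma:ui}, and \ref{lemma:uix}), one of which invokes Theorem \ref{thm:kras} a second time for the scalar family $\norm{\mathcal{F}}$, so all three hypotheses on $\mathcal{F}$ are consumed in establishing the mean condition. Your normalization trick --- $g=f/\gauge{f}$ valued in $X_1$, $d\nu=a^{-1}\gauge{f}\,dx$ a probability measure, hence $\int_E f\,dx = a\int_E g\,d\nu \in K\cco(X_1)$ with $K=\abs{E}^{1-1/p}M$ --- shows that boundedness in $L^p(\R^d;X)$ \emph{alone} already implies total boundedness in the mean, with every mean in one fixed compact set. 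This is shorter, bypasses the three auxiliary lemmas entirely, yields a slightly stronger intermediate statement, and isolates exactly where each hypothesis is used: equivanishing and equicontinuity enter only through Theorem \ref{thm:kras}. What the paper's route buys in exchange is the truncation technique and the uniform-integrability lemmas themselves, which have independent expository interest. One point of wording to correct in your write-up: $\Set{f=0}$ need not be Lebesgue-null; it is $\nu$-null (because $\gauge{f}$ vanishes exactly there), which is all your argument needs.
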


We need a definition and some lemmas for the proof.

\begin{definition} \label{def:u-int}
A set $\mathcal{F} \subset L^p(\Omega,\mu;B)$ is called \emph{uniformly $L^p$-integrable}
if for each $\varepsilon>0$ there exists some $r$ so that
\begin{equation*}
  \int_{\Omega} \iverson{\norm{f} > r} \, \norm{f}^p \,d\mu < \varepsilon \qquad(f \in \mathcal{F}).
\end{equation*}
\end{definition}
With this definition we get the following result.
\begin{lemma} \label{lemma:tbui}
  A totally bounded subset of $L^p(\Omega,\mu;B)$ is uniformly $L^p$-integrable.
\end{lemma}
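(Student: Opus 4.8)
The plan is to follow the same elementary pattern used above for the other uniformity properties: first verify the conclusion for a single function, then promote it to a totally bounded family by means of a finite net, using a splitting trick to handle the troublesome truncation.

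First I would dispose of the single-function case. For a fixed $f \in L^p(\Omega,\mu;B)$ the pointwise norm $\norm{f}$ is finite $\mu$-almost everywhere and $\norm{f}^p$ is integrable. Since $\iverson{\norm{f} > r}\,\norm{f}^p \le \norm{f}^p$ for all $r$, while the integrand tends to $0$ pointwise almost everywhere as $r \to \infty$, dominated convergence gives $\int_\Omega \iverson{\norm{f} > r}\,\norm{f}^p\,d\mu \to 0$. Thus every singleton is uniformly $L^p$-integrable, and, taking the largest of finitely many thresholds, so is every finite set.

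Now let $\mathcal{F}$ be totally bounded and fix $\varepsilon>0$. With a parameter $\delta>0$ to be fixed at the end, total boundedness provides a finite $\delta$-net $g_1,\dots,g_n \in \mathcal{F}$, and by the single-function case I can pick a single threshold $r_0$ with $\int_\Omega \iverson{\norm{g_j} > r_0}\,\norm{g_j}^p\,d\mu < \delta^p$ for every $j$ (the integral is nonincreasing in $r$, so one $r_0$ serves all $j$). Given an arbitrary $f \in \mathcal{F}$, choose $g_j$ with $\norm{f-g_j}_p < \delta$ and set $r = 2r_0$. The key step is to estimate $\int_\Omega \iverson{\norm{f}>r}\,\norm{f}^p\,d\mu$ by splitting the set $\Set{\norm{f}>r}$ according to the size of $\norm{g_j}$. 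On $\Set{\norm{f}>r} \cap \Set{\norm{g_j}\le r/2}$ one has $\norm{f-g_j} \ge \norm{f}-\norm{g_j} > r/2 \ge \norm{g_j}$, so $\norm{f} \le 2\norm{f-g_j}$ there, and this part contributes at most $2^p\norm{f-g_j}_p^p \le 2^p\delta^p$. On $\Set{\norm{f}>r} \cap \Set{\norm{g_j}>r/2}$ the convexity bound $\norm{f}^p \le 2^{p-1}\pars[\big]{\norm{f-g_j}^p + \norm{g_j}^p}$ (valid for $p\ge 1$) yields a contribution at most $2^{p-1}\delta^p + 2^{p-1}\int_\Omega \iverson{\norm{g_j}>r/2}\,\norm{g_j}^p\,d\mu < 2^p\delta^p$, using $r/2 = r_0$. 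Hence $\int_\Omega \iverson{\norm{f}>r}\,\norm{f}^p\,d\mu \le 2^{p+1}\delta^p$ for every $f \in \mathcal{F}$, and choosing $\delta$ small enough that $2^{p+1}\delta^p < \varepsilon$ finishes the argument.

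The only point that requires care — and the place where a naive continuity argument breaks down — is that the truncation set $\Set{\norm{f}>r}$ depends on $f$ itself, so one cannot simply compare $\iverson{\norm{f}>r}\,\norm{f}^p$ with $\iverson{\norm{g_j}>r}\,\norm{g_j}^p$ directly. The case split on whether $\norm{g_j}$ is below or above $r/2$ is precisely what converts closeness of $f$ and $g_j$ in $L^p$ into control of the $f$-tail integral; everything else is routine.
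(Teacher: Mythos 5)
Your proof is correct and follows exactly the strategy the paper hints at (the paper omits the proof, noting only: singleton case, then finite sets, then approximation by a finite net via total boundedness). Your case split on whether $\norm{g_j}$ exceeds $r/2$ is a sound and complete way to carry out the approximation step that the paper leaves to the reader.
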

We omit the easy proof,
hinting only that it is true for a singleton set,
hence for a finite set.
Now use total boundedness to approximate the given set by a finite set.
See also \cite{van-neerven-2014}.

\begin{lemma} \label{lemma:ui}
  Assume that $\mathcal{F} \subset L^p(\R^d;B)$ is
  bounded, $L^p$-equivanishing,
  and $L^p$-equicontinuous.
  Then $\mathcal{F}$ is uniformly $L^p$-integrable.
\end{lemma}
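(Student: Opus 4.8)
The plan is to reduce this vector-valued statement to the scalar Kolmogorov--Riesz situation already settled in Theorem~\ref{thm:kras}, and then to invoke Lemma~\ref{lemma:tbui}. The observation driving the reduction is that uniform $L^p$-integrability of $\mathcal{F}$ is a statement about the \emph{pointwise norms} alone: writing $g_f := \norm{f}$, regarded as a scalar function $\R^d \to [0,\infty)$, Definition~\ref{def:u-int} for $\mathcal{F}$ reads $\int_{\R^d}\iverson{g_f > r}\,g_f^p\,dx < \varepsilon$, which is verbatim uniform $L^p$-integrability of the scalar family $\mathcal{G} := \Set{g_f \given f \in \mathcal{F}} \subset L^p(\R^d) = L^p(\R^d;\R)$.

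First I would check that $\mathcal{G}$ inherits all the hypotheses. Since $\norm{g_f}_p = \norm{f}_p$, boundedness of $\mathcal{F}$ gives boundedness of $\mathcal{G}$; the equivanishing integrals $\int_{\R^d}\iverson{\abs{x}>r}\,g_f^p\,dx$ coincide identically with those for $\mathcal{F}$, so $\mathcal{G}$ is $L^p$-equivanishing; and the pointwise reverse triangle inequality $\abs{\norm{f(x+h)} - \norm{f(x)}} \le \norm{f(x+h) - f(x)}$ gives $\norm{\sigma_h g_f - g_f}_p \le \norm{\sigma_h f - f}_p$, so $L^p$-equicontinuity transfers as well. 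Along the way one records that $g_f$ is genuinely measurable, being the norm of a Bochner measurable function.

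The point that makes the reduction decisive is that, for scalar-valued functions, the remaining hypothesis of Theorem~\ref{thm:kras} — total boundedness in the mean — comes for free. Indeed, for any bounded measurable $E \subset \R^d$, Hölder's inequality gives $\abs{\int_E g_f\,dx} \le \abs{E}^{1-1/p}\,\norm{f}_p$, uniformly bounded over $\mathcal{F}$; since bounded subsets of $\R$ are totally bounded, $\Set{\int_E g_f\,dx \given f \in \mathcal{F}}$ is totally bounded in $\R$. Thus $\mathcal{G}$ is bounded, $L^p$-equivanishing, $L^p$-equicontinuous, and totally bounded in the mean, so Theorem~\ref{thm:kras} yields that $\mathcal{G}$ is totally bounded in $L^p(\R^d)$.

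Finally I would apply Lemma~\ref{lemma:tbui} to the scalar family $\mathcal{G}$, concluding that it is uniformly $L^p$-integrable, which by the identification in the first paragraph is precisely the assertion that $\mathcal{F}$ is uniformly $L^p$-integrable. I expect no serious obstacle: the only thing demanding care is that every hypothesis survives the passage $f \mapsto \norm{f}$ (the equicontinuity step leans on the reverse triangle inequality, and one must remember that total boundedness in the mean is automatic only because the scalar field is finite-dimensional). A more hands-on alternative would bypass Theorem~\ref{thm:kras} entirely by splitting $f = (f - f*\phi_\delta) + f*\phi_\delta$ with a mollifier $\phi_\delta$, using equicontinuity to make the first term small in $L^p$ and Hölder to bound the second uniformly in the supremum norm; but the reduction above is shorter and matches the paper's stated strategy of deducing everything from Theorem~\ref{thm:kras}.
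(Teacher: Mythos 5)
Your proof is correct and is essentially the same as the paper's: both pass to the scalar family $\norm{\mathcal{F}}$, verify that boundedness, $L^p$-equivanishing, and $L^p$-equicontinuity transfer (the last via the reverse triangle inequality), note that total boundedness in the mean is automatic because bounded subsets of $\R$ are totally bounded, and then conclude via Theorem~\ref{thm:kras} and Lemma~\ref{lemma:tbui}. Your write-up merely spells out (with H\"older's inequality) the mean-boundedness step that the paper leaves implicit.
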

\begin{proof}
  First, note that the set $\norm{\mathcal{F}}:=\Set{\norm{f} \given f \in \mathcal{F}}$
  is also bounded in $L^p(\R^d)$
  and $L^p$-equicontinuous –
  the latter follows from
  $\abs[\big]{\norm{f(x+h)}-\norm{f(x)}} \le \norm{f(x+h)-f(x)}$. The set
  $\norm{\mathcal{F}}$ is $L^p$-equivanishing, because $\mathcal{F}$ is.
  And since bounded subsets of $\R$ are totally bounded,
  $\norm{\mathcal{F}}$ is totally bounded by Theorem \ref{thm:kras}.
  (We could also use the Kolmogorov–Riesz theorem \cite[Thm.~5]{h-o-h-2010} here.)
  By Lemma~\ref{lemma:tbui}, $\norm{\mathcal{F}}$ is uniformly $L^p$-integrable,
  and hence (trivially) so is $\mathcal{F}$.
\end{proof}

\begin{lemma} \label{lemma:uix}
  If $\mathcal{F} \subset L^p(\R^d;B)$ is uniformly $L^p$-integrable,
  then for each $\varepsilon>0$ there exists $\delta>0$ so that
  $\int_D \norm{f(x)}^p \,dx < \varepsilon$
  whenever $D \subset \R^d$ is measurable with $\abs{D} < \delta$.
\end{lemma}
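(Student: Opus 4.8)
The plan is to exploit the dichotomy already built into Definition~\ref{def:u-int}: the contribution to $\int_D \norm{f}^p$ coming from the \emph{large} values of $\norm{f}$ is uniformly small by hypothesis, while the contribution from the \emph{bounded} values is controlled purely by the measure of $D$. First I would fix $\varepsilon>0$ and invoke uniform $L^p$-integrability to produce a single threshold $r$, the same for every $f\in\mathcal{F}$, with
\[
  \int_{\R^d} \iverson{\norm{f(x)} > r}\,\norm{f(x)}^p\,dx < \tfrac{\varepsilon}{2}.
\]
Since the defining condition only becomes easier as $r$ grows, we may take $r>0$.

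Next, for an arbitrary measurable $D\subset\R^d$ I would split the integrand according to the same threshold,
\[
  \int_D \norm{f(x)}^p\,dx
  = \int_D \iverson{\norm{f(x)} > r}\,\norm{f(x)}^p\,dx
  + \int_D \iverson{\norm{f(x)} \le r}\,\norm{f(x)}^p\,dx.
\]
The first term is dominated by the integral over all of $\R^d$, hence is smaller than $\varepsilon/2$ uniformly in $f$ and irrespective of $D$. On the set where $\norm{f(x)}\le r$ the integrand is at most $r^p$, so the second term is at most $r^p\,\abs{D}$. Choosing $\delta=\varepsilon/(2r^p)$ then forces the second term below $\varepsilon/2$ whenever $\abs{D}<\delta$, and adding the two estimates yields $\int_D \norm{f}^p\,dx<\varepsilon$ for every $f\in\mathcal{F}$ simultaneously.

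There is essentially no obstacle here: this is the familiar passage from uniform integrability to uniform absolute continuity of the integrals, and the whole argument is a single truncation. The only point that warrants a moment's care is the order of quantifiers — $r$ must be selected first, from the uniformity in Definition~\ref{def:u-int}, and held fixed across the family, so that the resulting $\delta$ depends only on $\varepsilon$ (through $r$) and not on the individual $f$. That uniform choice of $r$ is exactly what the hypothesis provides.
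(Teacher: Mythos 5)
Your proof is correct and is essentially identical to the paper's: the same truncation at a uniform threshold $r$, the same splitting of $\int_D \norm{f}^p\,dx$ into the parts where $\norm{f} \le r$ and $\norm{f} > r$, and the same choice $\delta = \varepsilon/(2r^p)$. Your added remark about the order of quantifiers is a sound clarification but not a departure from the paper's argument.
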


\begin{proof}
  Simply write
  \begin{equation*}
    \int_D \norm{f(x)}^p \,dx
    = \int_D \iverson{\norm{f(x)} \le r} \norm{f(x)}^p \,dx
    + \int_D \iverson{\norm{f(x)} > r} \norm{f(x)}^p \,dx,
  \end{equation*}
  pick $r$ so that the last integral is less than $\varepsilon/2$
  for all $f \in \mathcal{F}$ (with $D=\R^d$),
  and then note that the first integral is at most $\delta r^p$.
  Choosing $\delta=\varepsilon/(2r^p)$ yields the desired inequality.
\end{proof}

\begin{proof}[Proof of Theorem \ref{thm:ccc}]
  In light of Theorem \ref{thm:kras},
  we only need to show that $\mathcal{F}$ is totally bounded in the mean.

  First, take any $\varepsilon>0$.
  Combining Lemmas \ref{lemma:ui} and \ref{lemma:uix},
  there is some $\delta>0$ so that $\int_D \norm{f(x)}^p \,dx < \varepsilon^p$
  whenever $\abs{D}<\delta$.

  Write $\gauge{f}_p \le M$ for all $f \in \mathcal{F}$.
  Then $\abs{\Set{x \in \R^d\given \gauge{f(x)}>r}} < (M/r)^p$.
  Hence, if $r>M/\delta^{1/p}$,
  we get $\norm{\iverson{\gauge{f}>r}\, f}_p < \varepsilon$.
  That is, $\norm{f-\tilde f}_p < \varepsilon$,
  where we write $\tilde f = \iverson{\gauge{f} \le r}\,f$.

  Next, note that
  $\tilde f$ takes values in the totally bounded set $X_r$.
  The convex hull $\co X_r$ is totally bounded as well.
  Consider a bounded measurable set $E \subset \R^d$.
  The mean value $\abs{E}^{-1}\int_E \tilde f\,dx$
  belongs to the closure $\cco X_r$ of $\co X_r$,
  which is also totally bounded.

  From the above estimates,
  the integral $\int_E f\,dx$
  has distance less than $\varepsilon$ to the compact set $\abs{E}\cco X_r$.
  Since this is so for any $\varepsilon>0$
  (note that $r$ depends on $\varepsilon>0$),
  the set $\Set{\int_E f\,dx \given f \in \mathcal{F}}$
  is totally bounded.
  This completes the proof.
\end{proof}

The first condition of Theorem \ref{thm:ccc} allows us
to somewhat relax the $L^p$-equicontinuity condition:
We merely need to assume this condition in $L^p(\R^d;Y)$,
where $B$ is continuously embedded in a Banach space $Y$.
The resulting setting can be summarized by the formula
$X \Subset B \subseteq Y$,
where $X$ is a compactly gauged cone in $B$
and $B$ is embedded in $Y$.

However, a yet more general assumption,
first identified in \cite{chen-jungel-liu-2014},
avoids even the need for an embedding,
and instead has $B$ and $Y$ be Banach spaces embedded in some common space,
and $X \subset B \cap Y$ with $X \Subset B$.
This requires one more technical assumption,
detailed in the lemma below.
The assumption clearly holds when $B$ is embedded in $Y$,
and also in the very common situation where both spaces
are embedded in a common topological vector space,
such as a space of vector-valued distributions.

We will write $\norm{\dott}$ for the norm on $B$,
and $\altnorm{\dott}$ for the norm on $Y$.
Correspondingly,
we also write $\norm{\dott}_p$ and $\altnorm{\dott}_p$
for the norms in $L^p(\R^d;B)$ and $L^p(\R^d;Y)$
respectively.

\begin{lemma}[{\cite[Lemma 4]{chen-jungel-liu-2014}}] \label{lemma:cone-ehrling}
  Let $(B,\norm{\dott})$ and $(Y,\altnorm{\dott})$
  be Banach spaces which are subspaces of the same
  vector space,
  and let $X \subset B$ be a compactly gauged cone in $B$.
  Assume that there is no sequence in $B \cap Y$ converging in $Y$ to $0$
  and in $B$ to some non-zero vector.
  Then for each $\eta>0$ there exists $c>0$
  such that
  \begin{equation*}
    \norm{u-v} \le \eta\pars[\big]{\gauge{u}+\gauge{v}}
    + c \altnorm{u-v}\qquad\text{for all $u$, $v \in X$.}
  \end{equation*}
\end{lemma}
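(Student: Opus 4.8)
The plan is to argue by contradiction, exploiting the joint positive homogeneity of the three quantities involved together with the precompactness of $X_1$. Suppose the asserted inequality fails for some fixed $\eta > 0$. Taking $c = n$ for $n = 1, 2, \dots$, I would produce $u_n, v_n \in X$ with
\[
  \norm{u_n - v_n} > \eta\pars[\big]{\gauge{u_n} + \gauge{v_n}} + n\,\altnorm{u_n - v_n}.
\]
Each right-hand side is finite, so $\altnorm{u_n - v_n} < \infty$ and hence $w_n := u_n - v_n$ lies in $B \cap Y$ (it is in $B$ since $u_n, v_n \in X \subset B$); if one reads $\altnorm{\dott}$ as $+\infty$ off $Y$, the failing pairs are automatically of this form. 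Because replacing $(u_n, v_n)$ by $(\lambda u_n, \lambda v_n)$ with $\lambda > 0$ multiplies both sides by $\lambda$ — here using $\gauge{\lambda u} = \lambda\gauge{u}$, homogeneity of the two norms, and that $X$ is a cone — and because $u_n = v_n = 0$ would turn the inequality into the false statement $0 > 0$, I may rescale so that $\gauge{u_n} + \gauge{v_n} = 1$.

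After this normalization $u_n, v_n \in X_1 = \Set{x \in X \given \gauge{x} \le 1}$, which is precompact in $B$ by assumption. Recalling that on a compactly gauged cone one has $\norm{\dott} \le \gauge{\dott}$ (after the harmless rescaling of the gauge discussed earlier), I get $\norm{w_n} \le \gauge{u_n} + \gauge{v_n} = 1$. Substituting into the displayed inequality yields two conclusions: first $n\,\altnorm{w_n} < \norm{w_n} \le 1$, so $\altnorm{w_n} \to 0$, i.e.\ $w_n \to 0$ in $Y$; and second $\norm{w_n} > \eta$ for every $n$, so the $w_n$ stay bounded away from $0$ in $B$.

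It remains to extract limits and invoke the compatibility hypothesis. By precompactness of $X_1$ in $B$, after passing to a subsequence twice I may assume $u_n \to u$ and $v_n \to v$ in $B$; then $w_n \to u - v$ in $B$, and $\norm{u - v} \ge \eta > 0$ shows $u - v \ne 0$. Thus $(w_n)$ is a sequence in $B \cap Y$ converging in $Y$ to $0$ and in $B$ to the non-zero vector $u - v$, which is precisely what the hypothesis excludes — a contradiction. I expect the only delicate points to be the homogeneity rescaling (with its harmless trivial case) and the bookkeeping that certifies each $w_n \in B \cap Y$, so that the non-existence hypothesis actually applies; the production of $B$-convergent subsequences is immediate from the compact-gauge property, and everything else is routine.
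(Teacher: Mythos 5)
Your proof is correct, but it is organized differently from the paper's. You argue by contradiction: negating the conclusion with $c=n$, normalizing by homogeneity so that $\gauge{u_n}+\gauge{v_n}=1$, extracting $B$-convergent subsequences of $(u_n)$ and $(v_n)$ from the precompact set $X_1$, and then contradicting the no-sequence hypothesis directly in the sequential form in which it is stated. The paper instead gives a direct proof: it first reformulates the sequence hypothesis topologically (every non-zero $x \in B$ has a $B$-neighborhood on which $\altnorm{\dott}$ is bounded below on its intersection with $Y$, whence $\altnorm{\dott}$ has a positive lower bound on $K \cap Y$ for every compact $K \subset B \setminus \Set{0}$, by a finite covering), then applies this to the compact set $L = \Set{w \in K \given \norm{w} \ge \eta}$, where $K$ is the $B$-closure of the set of normalized differences $u-v$ with $\gauge{u}+\gauge{v}\le 1$, obtaining $c$ from the uniform lower bound, and finally dehomogenizes exactly as you do. The essential ingredients --- homogeneity normalization, precompactness of $X_1$, and the non-degeneracy hypothesis --- are the same in both arguments; the difference is sequential contradiction versus direct covering. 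Your route is shorter and uses the hypothesis verbatim, in the classical Ehrling style; the paper's route avoids proof by contradiction and isolates a reusable topological reformulation of the hypothesis. Your handling of the two delicate points (discarding the trivial case $u_n=v_n=0$ before normalizing, and certifying that each $w_n$ lies in $B \cap Y$ so that the hypothesis actually applies) is sound; note only that the bound $\norm{w_n}\le 1$, which you obtain from the normalization $\norm{\dott}\le\gauge{\dott}$, could equally be obtained from the boundedness of the precompact set $X_1$, so no appeal to the gauge--norm comparison is really needed there.
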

\begin{proof}
  First, note the following reformulation of the stated sequence
  condition:

  \textit{Each non-zero $x \in B$ has a neighborhood $U$ in $B$
    so that $\altnorm{\dott}$ has a positive lower bound on $U \cap Y$.}
  (If $U \cap Y = \emptyset$, we may take that lower bound to be $+\infty$.)
  It follows that
  $\altnorm{\dott}$ has a positive lower bound on $K \cap Y$
  for any compact set $K \subset B \setminus \Set{0}$.

  Now let $K$ be the closure in $X$ of the set
  $\Set{u-v \given u, v \in X \text{ and } \gauge{u}+\gauge{v} \le 1}$,
  which is totally bounded in $B$ because $\Set{u \in X \given \gauge{u} \le 1}$
  is totally bounded and subtraction is uniformly continuous.
  Thus $K$ is compact in $B$,
  and so is $L = \Set{w \in K \given \norm{w} \ge \eta}$.
  Since $0 \notin L$,
  $\altnorm{\dott}$ has a positive lower bound on $L \cap Y$,
  so for a sufficiently large $c$,
  $\norm{w} \le \eta + c \altnorm{w}$
  for all $w \in L$, and hence for all $w \in K$.
  That is,
  \begin{equation*}
    \norm{u-v} \le \eta
    + c \altnorm{u-v}\qquad\text{for all $u$, $v \in K$ with $\gauge{u}+\gauge{v} \le 1$.}
  \end{equation*}
  Replacing arbitrary $u$, $v \in X$ by
  $u/\pars[\big]{\gauge{u}+\gauge{v}}$ and
  $v/\pars[\big]{\gauge{u}+\gauge{v}}$ respectively
  yields the desired result.
\end{proof}

\begin{theorem}[compare Thm.\ \ref{thm:cjl} below] \label{thm:cjl+}
  Let $B$ and $Y$ be Banach spaces which are subspaces of the same
  vector space,
  and let $X \subset B$ be a compactly gauged cone in $B$.
  Assume that there is no sequence in $B \cap Y$ converging in $Y$ to $0$
  and in $B$ to some non-zero vector.
  Further, assume that $\mathcal{F} \subset L^p(\R^d;X) \cap L^p(\R^d;Y)$
  is bounded in $L^p(\R^d;X)$,
  $L^p$-equivanishing in $L^p(\R^d;B)$,
  and $L^p$-equicontinuous in $L^p(\R^d;Y)$,
  where $1 \le p < \infty$.
  Then $\mathcal{F}$ is totally bounded in $L^p(\R^d;B)$.
\end{theorem}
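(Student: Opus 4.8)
The plan is to reduce the statement to Theorem~\ref{thm:ccc}, whose hypotheses differ from the present ones in exactly one respect: it requires $L^p$-equicontinuity in $L^p(\R^d;B)$, whereas here we are only given $L^p$-equicontinuity in $L^p(\R^d;Y)$. Everything else already matches, since $\mathcal{F}\subset L^p(\R^d;X)$ is bounded and is $L^p$-equivanishing in $L^p(\R^d;B)$ by assumption. So the whole task is to upgrade equicontinuity from $Y$ to $B$, and the tool for this is precisely the cone--Ehrling inequality of Lemma~\ref{lemma:cone-ehrling}, traded off against the $L^p(\R^d;X)$-bound.

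Concretely, I would fix the bound $\gauge{f}_p\le M$ valid for all $f\in\mathcal{F}$. Given $\eta>0$, Lemma~\ref{lemma:cone-ehrling} furnishes a constant $c=c(\eta)$ with $\norm{u-v}\le\eta(\gauge{u}+\gauge{v})+c\altnorm{u-v}$ for all $u,v\in X$. Since each $f\in\mathcal{F}$ takes values in $X$, applying this pointwise with $u=f(x+h)$ and $v=f(x)$ gives
\begin{equation*}
  \norm{f(x+h)-f(x)} \le \eta\pars[\big]{\gauge{f(x+h)}+\gauge{f(x)}} + c\,\altnorm{f(x+h)-f(x)}.
\end{equation*}
Taking $L^p$-norms in $x$ and using monotonicity together with the triangle inequality in $L^p$ (Minkowski) yields
\begin{equation*}
  \norm{\sigma_h f-f}_p \le \eta\pars[\big]{\gauge{\sigma_h f}_p+\gauge{f}_p} + c\,\altnorm{\sigma_h f-f}_p \le 2\eta M + c\,\altnorm{\sigma_h f-f}_p,
\end{equation*}
where the last step uses translation invariance of Lebesgue measure to get $\gauge{\sigma_h f}_p=\gauge{f}_p$, and then $\gauge{f}_p\le M$. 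The measurability required to integrate is built into the definition of $L^p(\R^d;X)$.

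The remaining step is the Peter--Paul choice of constants. Given $\varepsilon>0$, I would first pick $\eta$ so small that $2\eta M<\varepsilon/2$, which fixes $c=c(\eta)$; then the assumed $L^p$-equicontinuity in $L^p(\R^d;Y)$ supplies a $\rho>0$ with $\altnorm{\sigma_h f-f}_p<\varepsilon/(2c)$ for all $f\in\mathcal{F}$ whenever $\abs{h}<\rho$. Combining the two bounds gives $\norm{\sigma_h f-f}_p<\varepsilon$ uniformly in $f$ for $\abs{h}<\rho$, which is exactly $L^p$-equicontinuity of $\mathcal{F}$ in $L^p(\R^d;B)$. Theorem~\ref{thm:ccc} then applies and delivers total boundedness of $\mathcal{F}$ in $L^p(\R^d;B)$.

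I do not anticipate a genuine obstacle: all the real content sits in Lemma~\ref{lemma:cone-ehrling}, and what is left is the familiar interchange of quantifiers. The one point demanding care is the order of the choices — $\eta$, and hence $c$, must be pinned down \emph{before} invoking the $Y$-equicontinuity, so that the smallness of $\altnorm{\sigma_h f-f}_p$ is measured against an already-fixed constant. The only other thing to verify is that the gauge terms are controlled uniformly over $\mathcal{F}$, but this is immediate from the $L^p(\R^d;X)$-bound and translation invariance, so no uniformity is lost.
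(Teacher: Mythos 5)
Your proof is correct and follows essentially the same route as the paper's: reduce to Theorem~\ref{thm:ccc} by upgrading $L^p$-equicontinuity from $Y$ to $B$ via the pointwise inequality of Lemma~\ref{lemma:cone-ehrling}, Minkowski's inequality, and the Peter--Paul choice of $\eta$ before $c$ and $\rho$. Your explicit note that $\gauge{\sigma_h f}_p=\gauge{f}_p$ by translation invariance is a small clarification the paper leaves implicit, but the argument is the same.
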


\begin{proof}
  We only need to prove that $\mathcal{F}$
  is $L^p$-equicontinuous in $L^p(\R^d;B)$,
  and then Theorem \ref{thm:ccc} takes care of the rest.

  To this end, let $\varepsilon>0$,
  pick $\eta>0$ (to be determined later), and pick a constant $c$
  as in the statement of Lemma \ref{lemma:cone-ehrling}.
  If $f \in \mathcal{F}$ and $h \in \R^d$ then
  \begin{align*}
    \norm{\sigma_hf-f}_p
    &=\pars[\Big]{\int_{\R^d} \norm[\big]{f(x+h)-f(x)}^p\,dx}^{1/p} \\
    &\le\pars[\Big]{\int_{\R^d} \pars[\Big]{
      \eta\pars[\big]{\gauge{f(x+h)}+\gauge{f(x)}}
      +c\altnorm[\big]{f(x+h)-f(x)}
      }^p\,dx}^{1/p} \\
    &\le 2\eta\gauge{f}_p + c\altnorm{\sigma_hf-f}_p.
  \end{align*}
  Since $\mathcal{F}$ is bounded in $L^p(\R^d;X)$,
  we can pick $\eta$ to ensure that $2\eta\gauge{f}_p<\varepsilon$ for all $f \in \mathcal{F}$.
  After this is done, $c$ is now fixed.
  If $\abs{h}$ is small enough, we have $c\altnorm{\sigma_hf-f}_p<\varepsilon$
  for all $f \in \mathcal{F}$, since $\mathcal{F}$ is $L^p$-equicontinuous in $L^p(\R^d;Y)$.
  Thus we have $\norm{\sigma_hf-f}<2\varepsilon$,
  and the proof is complete.
\end{proof}

\section{Compactness for Bochner spaces based on bounded intervals} \label{sec:boundint}

We have expressed our results so far
for functions defined on a Euclidean space $\R^d$,
since this seems natural.
However, in the most commonly occurring applications in PDE theory,
an interval $(0,T)$ is used instead.
We can of course embed $L^p(0,T;B)$ in $L^p(\R;B)$
by extending functions to be zero outside $(0,T)$.
All of our notions and results survive this extension intact,
with one exception, namely, the definition of $L^p$-continuity
(Definition \ref{def:Lp-ec}).
The common definition for functions on $(0,T)$ is as follows.
The seemingly minor, but potentially damaging, difference
is the need to restrict the interval of integration
to the part where $\sigma_hf(t)=f(t+h)$ is defined:

\begin{definition}[\cite{krukowski-2023}] \label{def:LP-eci}
  $\mathcal{F} \subset L^p(0,T;B)$ is called
  \emph{$L^p$-equicontinuous} 
  if $\int_0^{T-h}\norm{f(t+h)-f(t)}^p\,dt \to 0$
  when $0 < h \to 0$, uniformly for $f \in \mathcal{F}$.
\end{definition}

If we extend the integral to all of $\R$,
we get Definition \ref{def:Lp-ec}.
However, in doing so, we have to contend with two extra terms
$\int_0^h \norm{f}^p\,dt$ and $\int_{T-h}^T \norm{f}^p\,dt$.
It turns out that these extra terms vanish in the limit
if we assume also that $\mathcal{F}$ is bounded.
The following proposition allows us to apply all the compactness results
of the previous section to families of functions in $L^p(0,T;B)$.

\begin{proposition} \label{prop:contint}
  Assume that $\mathcal{F} \subset L^p(0,T;B)$ is bounded
  and $L^p$-equicontinuous according to Definition \ref{def:LP-eci}.
  Then, if we extend each function to $\R$
  by setting it zero outside $(0,T)$,
  the resulting set is $L^p$-equicontinuous
  according to Definition \ref{def:Lp-ec}.
\end{proposition}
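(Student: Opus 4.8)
The plan is to reduce the statement to a single clean decomposition and then to control the two ``boundary'' integrals that the zero-extension introduces. By the change of variables $t \mapsto t-h$ one has $\norm{\sigma_h f - f}_p = \norm{f - \sigma_{-h}f}_p$ in $L^p(\R;B)$, so it suffices to treat $h>0$. Writing $f$ also for the extension by zero and assuming $0<h<T$, the support structure of $f$ yields the decomposition
\begin{equation*}
  \norm{\sigma_h f - f}_p^p
  = \int_0^h \norm{f(s)}^p\,ds
  + \int_0^{T-h} \norm{f(t+h)-f(t)}^p\,dt
  + \int_{T-h}^T \norm{f(t)}^p\,dt,
\end{equation*}
the three terms coming from $t \in (-h,0)$, $t \in (0,T-h)$, and $t \in (T-h,T)$. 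The middle term is exactly the quantity in Definition \ref{def:LP-eci}, so it tends to $0$ as $h \to 0$ uniformly over $\mathcal{F}$ by hypothesis. Everything thus reduces to showing that the boundary terms $\int_0^h \norm{f}^p$ and $\int_{T-h}^T \norm{f}^p$ vanish as $h \to 0$, uniformly for $f \in \mathcal{F}$.

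This is the \emph{main obstacle}, and it is precisely where boundedness alone fails: the bound $\norm{f}_p \le M$ only gives uniform smallness of $\int_{T-h}^T \norm{f}^p$ through absolute continuity of the integral, which is not uniform over $\mathcal{F}$ (mass could concentrate near an endpoint). The point is to extract the needed uniformity from equicontinuity, and I would do so with an averaging-over-translates identity. Fix an auxiliary length $L$ with $0<L<T/2$ (to be chosen last) and restrict to $0<h<T/2$. For $t \in (T-h,T)$ write
\begin{equation*}
  f(t) = \frac1L\int_0^L f(t-s)\,ds + \frac1L\int_0^L \pars[\big]{f(t)-f(t-s)}\,ds,
\end{equation*}
and treat $\int_0^h \norm{f}^p$ symmetrically by averaging the forward translates $f(t+s)$.

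The two pieces are then estimated separately. For the averaged-translate piece, Hölder's inequality in $s$ followed by Fubini gives
\begin{equation*}
  \int_{T-h}^T \norm[\Big]{\tfrac1L\int_0^L f(t-s)\,ds}^p dt
  \le L^{-1}\int_{T-h}^T \int_{t-L}^t \norm{f(u)}^p\,du\,dt
  \le L^{-1} M^p h,
\end{equation*}
whose $p$-th root is at most $L^{-1/p} M h^{1/p}$ and tends to $0$ as $h \to 0$ with $L$ fixed. For the difference piece, the same Hölder/Fubini step together with the substitution $u=t-s$ turns $\int_{T-h}^T \norm{f(t)-f(t-s)}^p dt$ into $\int_{T-h-s}^{T-s}\norm{f(u+s)-f(u)}^p du$, which (since $0<T-h-s$ and $T-s\le T$ in our range of $h,s$) is a sub-integral of the quantity in Definition \ref{def:LP-eci} at shift $s\le L$; hence it is at most $\bar\omega(L)^p$, where $\bar\omega(L):=\sup_{0<s\le L}\sup_{f\in\mathcal{F}}\pars[\big]{\int_0^{T-s}\norm{f(t+s)-f(t)}^p dt}^{1/p}\to 0$ as $L\to 0$. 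Averaging in $s$ contributes only the factor $L^{-1}\cdot L=1$, so this piece is at most $\bar\omega(L)$.

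Combining, each boundary term is bounded by $L^{-1/p}Mh^{1/p}+\bar\omega(L)$, uniformly in $f$. Given $\varepsilon>0$ I would \emph{first} choose $L$ so small that $\bar\omega(L)<\varepsilon$ (using the uniform equicontinuity), and only \emph{then} choose $h$ small enough that $L^{-1/p}Mh^{1/p}<\varepsilon$; inserting this, together with the uniform smallness of the middle term, back into the displayed decomposition yields $\norm{\sigma_h f - f}_p \to 0$ uniformly for $f \in \mathcal{F}$, which is Definition \ref{def:Lp-ec}. The only genuine subtlety is the order of quantifiers in this last step (pick $L$ from the equicontinuity modulus first, then $h$); the averaging identity is exactly what decouples the ``thin slice of length $h$'' from the ``small shift of length $L$''.
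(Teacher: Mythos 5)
Your proof is correct, and its skeleton --- the exact three-term decomposition of $\norm{\sigma_hf-f}_p^p$ and the reduction to showing that the boundary integrals $\int_0^h \norm{f}^p\,dt$ and $\int_{T-h}^T \norm{f}^p\,dt$ vanish uniformly --- matches the paper's. Where you genuinely diverge is in how those boundary terms are killed. The paper first proves an auxiliary lemma (Lemma \ref{lemma:normtrans}: a H\"older argument showing that $\norm{\mathcal{F}}^p$ is $L^1$-equicontinuous), which reduces everything to scalar, non-negative functions with $p=1$, and then argues by \emph{contraposition}: if mass $\varepsilon$ persists on $(0,h)$ for arbitrarily small $h$, a telescoping sum over the translated blocks $f_i(t)=f(ih+t)$ shows that near-translation-invariance propagates this mass to $n$ disjoint subintervals, forcing $\norm{f}_1 \ge n(\varepsilon-\eta)$ and contradicting boundedness. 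You instead argue \emph{directly} and for all $p$ at once: on the thin slice you write $f$ as the average of its translates over a window of fixed length $L$ plus the average of the corresponding differences; Jensen and Fubini bound the first piece by $L^{-1/p}Mh^{1/p}$ (the total mass $M^p$ spread over a window of length $L$, integrated over a slice of length $h$) and the second by the equicontinuity modulus $\bar\omega(L)$, and the quantifier order (choose $L$ from equicontinuity first, then $h$) closes the argument. Your route avoids Lemma \ref{lemma:normtrans} and the reduction to $p=1$ entirely, and it produces an explicit quantitative modulus for the boundary terms; the paper's route isolates a reduction lemma of some independent interest and gives the perhaps more vivid picture that concentration of mass near an endpoint is incompatible with boundedness. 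Both proofs correctly identify the same crux, namely that boundedness alone (i.e., absolute continuity of the integral) cannot supply the uniformity over $\mathcal{F}$, so equicontinuity must be brought to bear on the boundary slices.
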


The proof depends on a slight detour:

\begin{lemma} \label{lemma:normtrans}
  If $\mathcal{F} \subset L^p(0,T;B)$ $(1 \le p < \infty)$ is bounded
  and $L^p$-equicontinuous,
  then $\norm{\mathcal{F}}^p$ is $L^1$-equicontinuous.
\end{lemma}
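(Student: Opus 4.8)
The plan is to deduce the $L^1$-equicontinuity of $\norm{\mathcal F}^p=\Set{\norm{f}^p\given f\in\mathcal F}$ from the $L^p$-equicontinuity of $\mathcal F$ by means of a pointwise comparison of $\abs{a^p-b^p}$ with $\abs{a-b}$, followed by Hölder's inequality. By Definition \ref{def:LP-eci} applied with exponent $1$, I must show that
\begin{equation*}
  \int_0^{T-h}\abs[\big]{\norm{f(t+h)}^p-\norm{f(t)}^p}\,dt\to 0
\end{equation*}
uniformly for $f\in\mathcal F$ as $0<h\to 0$; the hypothesis supplies precisely the corresponding uniform decay of $\int_0^{T-h}\norm{f(t+h)-f(t)}^p\,dt$.

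The key elementary estimate is that $\abs{a^p-b^p}\le p(a+b)^{p-1}\abs{a-b}$ for all $a,b\ge 0$ (immediate from $a^p-b^p=\int_b^a ps^{p-1}\,ds$ when $a\ge b$, bounding $s^{p-1}\le(a+b)^{p-1}$). Taking $a=\norm{f(t+h)}$ and $b=\norm{f(t)}$, and using the reverse triangle inequality $\abs{a-b}\le\norm{f(t+h)-f(t)}$, I would integrate over $(0,T-h)$ and apply Hölder's inequality with the conjugate exponents $p/(p-1)$ and $p$ (for $p>1$). This bounds the integral above by
\begin{equation*}
  p\pars[\bigg]{\int_0^{T-h}\pars[\big]{\norm{f(t+h)}+\norm{f(t)}}^p\,dt}^{(p-1)/p}
  \pars[\bigg]{\int_0^{T-h}\norm{f(t+h)-f(t)}^p\,dt}^{1/p}.
\end{equation*}

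The first factor is where boundedness enters: since $\pars[\big]{\norm{f(t+h)}+\norm{f(t)}}^p\le 2^{p-1}\pars[\big]{\norm{f(t+h)}^p+\norm{f(t)}^p}$ and each resulting integral is at most $\norm{f}_p^p\le M^p$, that factor is bounded by a constant depending only on $p$ and the uniform bound $M$, independently of $h$ and $f$. The second factor tends to $0$ uniformly in $f$ as $h\to 0$, exactly by the assumed $L^p$-equicontinuity. Hence the product, and therefore the original integral, tends to $0$ uniformly, which is the claim. The case $p=1$ is even simpler and needs no boundedness, since the reverse triangle inequality gives $\abs[\big]{\norm{f(t+h)}-\norm{f(t)}}\le\norm{f(t+h)-f(t)}$ pointwise and one integrates directly. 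There is no substantial obstacle here; the only point requiring care is the elementary convexity inequality together with the bookkeeping in Hölder's inequality, with the boundedness hypothesis serving solely to keep the $(p-1)/p$-power factor uniformly bounded.
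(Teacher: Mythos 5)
Your proof is correct and follows essentially the same route as the paper's: a pointwise mean-value-type estimate $\abs{a^p-b^p}\le p(\cdot)^{p-1}\abs{a-b}$ (the paper uses $a\vee b$ where you use $a+b$, an immaterial difference), followed by Hölder with exponents $p/(p-1)$ and $p$, the reverse triangle inequality, boundedness to control the first factor, and $L^p$-equicontinuity for the second. The handling of the $p=1$ case is also identical.
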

\begin{proof}
  Assume $1 < p < \infty$, and let $q$ be the conjugate exponent.
  (The case $p=1$ is immediate.)
  Writing $a \vee b$ for the maximum of $a$ and $b$,
  and assuming $0<h<T$, we estimate
  \begin{multline*}
    \int_0^{T-h}\abs[\big]{\norm{f(t+h)}^p-\norm{f(t)}^p}\,dt \\
    \begin{aligned}
      \qquad
    &\le p\int_0^{T-h}\pars[\big]{\norm{f(t+h)}\vee \norm{f(t)}}^{p-1}
    \abs[\big]{\norm{f(t+h)}-\norm{f(t)}}\,dt \\
    &\le p\pars[\Big]{\int_0^{T-h}\pars[\big]{\norm{f(t+h)}\vee \norm{f(t)}}^{(p-1)q}\,dt}^{1/q}\cdot
      \pars[\Big]{\int_0^{T-h}\abs[\big]{\norm{f(t+h)}-\norm{f(t)}}^p\,dt}^{1/p} \\
    &\le p\pars[\Big]{\int_0^{T-h}\pars[\big]{\norm{f(t+h)} + \norm{f(t)}}^p\,dt}^{1/q}\cdot
      \pars[\Big]{\int_0^{T-h}\norm{f(t+h)-f(t)}^p\,dt}^{1/p}
    \end{aligned}
  \end{multline*}
  using the Hölder inequality and noting that $(p-1)q=p$.
  The first integral on the final line is uniformly bounded
  for $f \in \mathcal{F}$,
  and the desired result follows.
\end{proof}

\begin{proof}[Proof of Proposition \ref{prop:contint}]
  We only need to show that
  $\int_0^h \norm{f(t)}^p\,dt \to 0$ and
  $\int_{T-h}^T \norm{f(t)}^p\,dt \to 0$
  as $0<h \to 0$, uniformly for $f \in \mathcal{F}$.
  (Proving the sufficiency of this claim is left to the reader.)
  We only prove the former;
  the latter is shown similarly.

  By Lemma \ref{lemma:normtrans},
  we may as well replace $\mathcal{F}$ by $\norm{\mathcal{F}}^p$,
  and $p$ by $1$;
  i.e., we assume that $B=\R$, $p=1$
  and that $\mathcal{F}$ consists of non-negative functions.

  We shall prove the contrapositive:
  Assume that $\varepsilon>0$ and that for every $h>0$
  there is some $f \in \mathcal{F}$ with $\int_0^h f\,dt \ge \varepsilon$,
  and also that
  $\mathcal{F}$ is $L^p$-equicontinuous according to Definition \ref{def:LP-eci}.

  We shall prove that then $\mathcal{F}$ is unbounded.

  To this end, pick any $\eta>0$
  and let $\rho \in (0,T)$ be so that
  $\int_0^{T-h}\norm{f(t+h)-f(t)}\,dt < \eta$ whenever $0<h<\rho$.
  Next pick any natural number $n$,
  let $h \in (0,\rho)$ with $(n+1)h<T$,
  and let $f \in \mathcal{F}$ with $\int_0^h f\,dt \ge \varepsilon$.
  Write $f_i(t)=f(ih+t)$, for $t\in(0,h)$ and $i=0$, $1$, \ldots, $n$.
  Then
  \begin{align*}
    \int_0^T f\,dt
    &\ge \sum_{i=1}^n \norm{f_i}_1 \\
    &= n\norm{f_0}_1 + \sum_{i=0}^{n-1} (n-i)
      \pars[\big]{\norm{f_{i+1}}_1-\norm{f_{i}}_1} \\
    &\ge n\varepsilon - n \sum_{i=0}^{n-1}\abs[\big]{\norm{f_{i+1}}_1-\norm{f_{i}}_1}
     \ge n\varepsilon - n \sum_{i=0}^{n-1}\norm{f_{i+1}-f_{i}}_1 \\
    &\ge n\varepsilon - n \int_0^{T-h}\abs{f(t+h)-f(t)}\,dt
     \ge n(\varepsilon-\eta).
  \end{align*}
  Thus merely picking $\eta<\varepsilon$ and $n$ large
  (forcing $h$ to be small),
  we get arbitrarily large values for $\norm{f}_1$ with $f \in \mathcal{F}$.
\end{proof}

Combining Prop.~\ref{prop:contint} and Thm.~\ref{thm:cjl+}
immediately yields the following:

\begin{theorem}[{\cite[Thm.~1]{chen-jungel-liu-2014}}] \label{thm:cjl}
  Let $B$ and $Y$ be Banach spaces which are subspaces of the same
  vector space,
  and let $X \subset B$ be a compactly gauged cone in $B$.
  Assume that there is no sequence in $B \cap Y$ converging in $Y$ to $0$
  and in $B$ to some non-zero vector.
  Further, assume that $\mathcal{F} \subset L^p(0,T;X) \cap L^p(0,T;Y)$
  is bounded in $L^p(0,T;X)$,
  $L^p$-equivanishing in $L^p(0,T;B)$,
  and $L^p$-equicontinuous in $L^p(0,T;Y)$,
  where $1 \le p < \infty$.
  Then $\mathcal{F}$ is totally bounded in $L^p(0,T;B)$.
\end{theorem}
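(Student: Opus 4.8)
The plan is to reduce the interval case to the Euclidean case already settled in Theorem~\ref{thm:cjl+}. Given $\mathcal F\subset L^p(0,T;X)\cap L^p(0,T;Y)$, I extend every $f\in\mathcal F$ by zero outside $(0,T)$, obtaining a family $\tilde{\mathcal F}$ of functions on $\R$ all supported in $[0,T]$. The zero-extension map is a linear isometry of $L^p(0,T;B)$ onto the closed subspace of functions supported in $[0,T]$, so total boundedness of $\tilde{\mathcal F}$ in $L^p(\R;B)$ is equivalent to total boundedness of $\mathcal F$ in $L^p(0,T;B)$. It therefore suffices to verify the hypotheses of the line version for $\tilde{\mathcal F}$.

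Two of the three hypotheses transfer at once. Boundedness in $L^p(\R;X)$ is immediate, since the extension leaves the gauge integral unchanged. The $L^p$-equivanishing condition in $L^p(\R;B)$ is automatic: each $\tilde f$ is supported in $[0,T]$, so $\int_{\R}\iverson{\abs{x}>r}\,\norm{\tilde f(x)}^p\,dx$ vanishes as soon as $r\ge T$, uniformly in $f$. The real content is the $L^p$-equicontinuity of $\tilde{\mathcal F}$ on the line, and this is exactly the transfer that Proposition~\ref{prop:contint} is designed to perform, passing from Definition~\ref{def:LP-eci} on $(0,T)$ to Definition~\ref{def:Lp-ec} on $\R$.

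The main obstacle is that Proposition~\ref{prop:contint} requires boundedness in the Banach space carrying the equicontinuity, in order to kill the boundary terms $\int_0^h\norm{\dott}^p$ and $\int_{T-h}^T\norm{\dott}^p$ produced by the extension. We are given equicontinuity only in $Y$, but we have no control on $\altnorm{f}_p$; hence Proposition~\ref{prop:contint} cannot be applied directly in $Y$, as the $Y$-boundary terms of the extension need not vanish. I would sidestep this by first upgrading the $Y$-equicontinuity on the interval to $B$-equicontinuity on the interval, exactly as in the proof of Theorem~\ref{thm:cjl+}: applying Lemma~\ref{lemma:cone-ehrling} pointwise and then Minkowski's inequality on $(0,T-h)$ gives, for every $\eta>0$ and a corresponding $c$,
\begin{equation*}
  \pars[\bigg]{\int_0^{T-h}\norm{f(t+h)-f(t)}^p\,dt}^{1/p}
  \le 2\eta M + c\,\altnorm{\sigma_h f-f}_{L^p(0,T-h;Y)},
\end{equation*}
where $M$ is the $L^p(0,T;X)$-bound. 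The second term tends to $0$ uniformly by the assumed $Y$-equicontinuity, so choosing $\eta$ small and then $h$ small shows that $\mathcal F$ is $L^p$-equicontinuous in $L^p(0,T;B)$.

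With this in hand the remaining steps are routine. Since $\norm{\dott}\le\gauge{\dott}$, the $X$-bound gives boundedness of $\mathcal F$ in $L^p(0,T;B)$, so Proposition~\ref{prop:contint} now applies \emph{in} $B$ and yields $L^p$-equicontinuity of $\tilde{\mathcal F}$ in $L^p(\R;B)$. Thus $\tilde{\mathcal F}$ is bounded in $L^p(\R;X)$, $L^p$-equivanishing in $L^p(\R;B)$, and $L^p$-equicontinuous in $L^p(\R;B)$, so Theorem~\ref{thm:ccc} (this is precisely the final step in the proof of Theorem~\ref{thm:cjl+}) shows that $\tilde{\mathcal F}$ is totally bounded in $L^p(\R;B)$. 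Restricting back to $(0,T)$ completes the argument.
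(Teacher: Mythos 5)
Your proof is correct, and at top level it is the paper's own approach: extend by zero and reduce to the Euclidean-space theory via Proposition~\ref{prop:contint}. The difference is that the paper disposes of this theorem in one sentence (``Combining Prop.~\ref{prop:contint} and Thm.~\ref{thm:cjl+} immediately yields the following''), and you have correctly identified that the combination is not quite as immediate as that suggests: Proposition~\ref{prop:contint} transfers equicontinuity from $(0,T)$ to $\R$ only in a norm in which $\mathcal F$ is \emph{bounded}, while the hypotheses give equicontinuity in $Y$ and no bound whatsoever on $\altnorm{f}_p$; so one cannot simply apply the proposition in $Y$ and then quote Theorem~\ref{thm:cjl+} for the extended family. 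Your reordering --- first running the Lemma~\ref{lemma:cone-ehrling} estimate over $(0,T-h)$ to convert $Y$-equicontinuity in the sense of Definition~\ref{def:LP-eci} into $B$-equicontinuity on the interval, where the needed bound $\norm{f}_p\le\gauge{f}_p\le M$ is available, and only then applying Proposition~\ref{prop:contint} in $B$ and concluding with Theorem~\ref{thm:ccc} (equivalently, Theorem~\ref{thm:cjl+} with $Y=B$) --- is exactly the glue needed to make the paper's ``immediately'' rigorous. The individual steps all check out: the pointwise use of Lemma~\ref{lemma:cone-ehrling} for values in the cone, the two applications of Minkowski's inequality, the trivial $L^p$-equivanishing of functions supported in $[0,T]$, and the fact that zero extension is an isometry onto its range, so total boundedness pulls back to $L^p(0,T;B)$. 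In short: same route as the paper, but your version spells out, and repairs, the step the paper leaves implicit.
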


\noindent {\bf Acknowledgments.}
The authors thank Endre S\"uli for helpful discussion regarding the paper
by Ehrling \cite{ehrling-1954},
and the anonymous referee
for pointing out a flaw in our original proof of Theorem \ref{thm:ccc}.


\end{document}